\date{\empty}
\numberwithin{equation}{section} \theoremstyle{plain}
\newtheorem*{thm*}{Main Theorem}
\newtheorem{theorem}{Theorem}[section]
\newtheorem{corollary}[theorem]{Corollary}
\newtheorem*{corollary*}{Corollary}
\newtheorem*{claim*}{Claim}
\newtheorem{lemma}[theorem]{Lemma}
\newtheorem*{lemma*}{Lemma}
\newtheorem{proposition}[theorem]{Proposition}
\newtheorem*{proposition*}{Proposition}
\newtheorem{remark}[theorem]{Remark}
\newtheorem*{remark*}{Remark}
\newtheorem{example}[theorem]{Example}
\newtheorem*{example*}{Example}
\newtheorem*{question*}{Question}
\newtheorem*{definition*}{Definition}
\newtheorem*{acknowledgements*}{ACKNOWLEDGEMENTS}
\begin{document}
\begin{center}
{\large  \bf Characterizations and representations of core and dual core inverses}\\
\vspace{0.8cm} {\small \bf   Jianlong
Chen$^{[1]}$\footnote{Corresponding author

1 Department of Mathematics, Southeast University, Nanjing 210096, China.

2 CMAT-Centro de Matem\'{a}tica, Universidade do Minho, Braga
4710-057, Portugal.

3 Departamento de Matem\'{a}tica e Aplica\c{c}\~{o}es, Universidade
do Minho, Braga 4710-057, Portugal.

Email: jlchen@seu.edu.cn(J. Chen), ahzhh08@sina.com(H. Zhu),
pedro@math.uminho.pt (P. Patr\'{i}cio), Zhang@math.uminho.pt(Y.
Zhang).}, Huihui Zhu$^{[1,2]}$, Pedro Patr\'{i}cio$^{[2,3]}$, Yulin
Zhang$^{[2,3]}$}
\end{center}

\bigskip

{ \bf  Abstract:}  \leftskip0truemm\rightskip0truemm  In this paper,
double commutativity and the reverse order law for the core inverse
are considered. Then, new characterizations of the Moore-Penrose
inverse of a regular element are given by one-sided invertibilities
in a ring. Furthermore, the characterizations and representations of
the core and dual core inverses of a regular element are considered.

\textbf{Keywords:} Regularities, Group inverses, Moore-Penrose
inverses, Core inverses, Dual core inverses, Dedekind-finite rings

\textbf{AMS Subject Classifications:} 15A09, 15A23
\bigskip



\section { \bf Introduction}

Let $R$ be an associative ring with unity 1. We say that $a\in R$ is
(von Neumann) regular if there exists $x\in R$ such that $axa=a$.
Such $x$ is called an inner inverse of $a$, and is denoted by
$a^{-}$. Let $a\{1\}$ be the set of all inner inverses of $a$.
Recall that an element $a\in R$ is said to be group invertible if
there exists $x\in R$ such that $axa=a$, $xax=x$ and $ax=xa$. The
element $x$ satisfying the conditions above is called a group
inverse of $a$. The group inverse of $a$ is unique if it exists, and
is denoted by $a^\#$.

Throughout this paper, assume that $R$ is a unital $*$-ring, that is
a ring with unity 1 and an involution $a \mapsto a^*$ such that
$(a^*)^* = a$, $(a+b)^* = a^* + b^*$ and $(ab)^* = b^*a^*$ for all
$a,b\in R$. An element $a\in R$ is called Moore-Penrose invertible
\cite{Penrose} if there exists $x\in R$ satisfying the following
equations
 \begin{center}
 ${\rm(i)}~axa=a$,~~ ${\rm (ii)}~ xax=x$,~~ ${\rm (iii)}~(ax)^*=ax$,~~ ${\rm (iv)}~(xa)^*=xa$.
\end{center}
Any element $x$ satisfying the equations (i)-(iv) is called a
Moore-Penrose inverse of $a$. If such $x$ exists, it is unique and
is denoted by $a^\dag$. If $x$ satisfies the conditions (i) and
(iii), then $x$ is called a $\{1,3\}$-inverse of $a$, and is denoted
by $a^{(1,3)}$. If $x$ satisfies the conditions (i) and (iv), then
$x$ is called a $\{1,4\}$-inverse of $a$, and is denoted by
$a^{(1,4)}$. The symbols $a$\{1,3\} and $a$\{1,4\} denote the sets
of all \{1,3\}-inverses and \{1,4\}-inverses of $a$, respectively.

The concept of core inverse of a complex matrix was first introduced
by Baksalary and Trenkler \cite{Baksalary and Trenkler}. Recently,
Raki\'{c} et al. \cite{Serbia} generalized the definition of core
inverse to the ring case. An element $a\in R$ is core invertible
(see \cite[Definition 2.3]{Serbia}) if there exists $x\in R$ such
that $axa=a$, $xR=aR$ and $Rx=Ra^*$. It is known that the core
inverse $x$ of $a$ is unique if it exists, and is denoted by
$a^{(\#)}$. The dual core inverse of $a$ when exists is defined as
the unique $a_{(\#)}$ such that $aa_{(\#)}a=a$, $a_{(\#)}R =a^* R$
and $Ra_{(\#)}=Ra$. By  $R^{-1}$, $R^\#$, $R^\dag$, $R^{(1,3)}$,
$R^{(1,4)}$, $R^{(\#)}$ and $R_{(\#)}$ we denote the sets of all
invertible, group invertible, Moore-Penrose invertible,
$\{1,3\}$-invertible, $\{1,4\}$-invertible, core invertible and dual
core invertible elements in $R$, respectively.

In this paper, double commutativity and the reverse order law for
the core inverse proposed in \cite{Baksalary} are considered. Also,
we characterize the Moore-Penrose inverse of a regular element by
one-sided invertibilities in a ring $R$. Furthermore, new existence
criteria and representations of core inverse and dual core inverse
of a regular element are given by units.

\section{Some lemmas}

The following lemmas will be useful in the sequel.

\begin{lemma} \label{Jlemma} Let $a,b\in R$. Then

\emph{(i)} If there exists $x\in R$ such that $(1+ab)x=1$, then
$(1+ba)(1-bxa)=1$.

\emph{(ii)} If there exists $y\in R$ such that $y(1+ab)=1$, then
$(1-bya)(1+ba)=1$.
\end{lemma}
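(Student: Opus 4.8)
The plan is to prove the classical ``$1+ab$ invertible $\Leftrightarrow$ $1+ba$ invertible'' identity (Jacobson's lemma flavor), but in the one-sided setting, by exhibiting explicit one-sided inverses. For part (i), I am given $x$ with $(1+ab)x=1$, and I must verify that $1-bxa$ is a right inverse of $1+ba$. The natural guess for the inverse comes from the formal power series heuristic: if $(1+ab)^{-1}=\sum(-ab)^n$, then $(1+ba)^{-1}=\sum(-ba)^n = 1 - b\bigl(\sum(-ab)^n\bigr)a = 1-bxa$. So the candidate $1-bxa$ is forced, and the only real task is a direct algebraic verification.

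For part (i), I would simply expand the product $(1+ba)(1-bxa)$ and push everything back to the hypothesis $(1+ab)x=1$. Concretely,
\begin{align*}
(1+ba)(1-bxa)
&= 1 - bxa + ba - babxa \\
&= 1 + ba - b\bigl(x + abx\bigr)a \\
&= 1 + ba - b\bigl((1+ab)x\bigr)a \\
&= 1 + ba - b\cdot 1\cdot a = 1.
\end{align*}
The key maneuver is recognizing the grouping $x+abx=(1+ab)x$, at which point the hypothesis collapses the bracketed term to $1$ and the $ba$ terms cancel. There is genuinely no obstacle here; it is a one-line factoring trick, and the whole content of the lemma is having the correct candidate $1-bxa$ to plug in.

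For part (ii), the argument is the mirror image: given $y(1+ab)=1$, I claim $1-bya$ is a left inverse of $1+ba$, and I would expand $(1-bya)(1+ba)$ symmetrically, this time grouping $y+yab=y(1+ab)$ on the left to invoke the hypothesis. The only thing worth stating explicitly is that in a general ring left and right inverses are not interchangeable, so (i) and (ii) really are two separate computations rather than one obtained from the other by a transpose; that is presumably why the authors phrase the lemma as two one-sided assertions rather than a single two-sided equivalence. I expect no step to be difficult; if anything, the ``obstacle'' is purely bookkeeping — making sure the noncommutative factors are kept in the correct left-to-right order during the expansion so that the cancellation $b\cdot 1\cdot a$ goes through cleanly.
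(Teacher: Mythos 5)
Your proof is correct. The paper actually states this lemma without any proof at all (it is folklore, the one-sided version of Jacobson's Lemma), so your direct expansion fills in exactly the verification the authors implicitly rely on: the groupings $x+abx=(1+ab)x$ in (i) and $y+yab=y(1+ab)$ in (ii) are the whole content, both computations check out, and your remark that the two parts are genuinely separate one-sided statements (not transposes of each other) is also accurate.
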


According to Lemma \ref{Jlemma}, we know that $1+ab \in R^{-1}$ if
and only if $1+ba\in R^{-1}$. In this case,
$(1+ba)^{-1}=1-b(1+ab)^{-1}a$, which is known as Jacobson's Lemma.

\begin{lemma} \label{1314} {\rm \cite[p. 201]{Hartwig}} Let $a,x\in R$.  Then

\emph{(i)}  $x$ is a $\{1,3\}$-inverse of $a$ if and only if
$a^*=a^*ax$.

\emph{(ii)}   $x$ is a $\{1,4\}$-inverse of $a$ if and only if
$a=aa^*x^*$.
\end{lemma}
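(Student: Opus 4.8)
The plan is to prove each equivalence by establishing both implications directly from the involution axioms $(a^*)^*=a$ and $(pq)^*=q^*p^*$, feeding in the two defining conditions $axa=a$ and $(ax)^*=ax$ (respectively $(xa)^*=xa$). Since part (ii) is the formal dual of part (i) — obtained by applying the involution and interchanging the roles of left and right multiplication — I would prove (i) in full and then indicate that (ii) follows by the mirror-image argument.

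For the forward implication of (i), I would assume $x$ is a $\{1,3\}$-inverse, so that $axa=a$ and, rewriting the symmetry condition, $ax=(ax)^*=x^*a^*$. Substituting this into $a^*ax$ gives
\[
a^*ax = a^*(x^*a^*) = (axa)^* = a^*,
\]
where the middle step uses $(axa)^*=a^*x^*a^*$ and the last step uses $axa=a$. For the converse, I would start from $a^*=a^*ax$ and apply the involution to obtain $a=x^*a^*a$. Right-multiplying this identity by $x$ yields
\[
ax = x^*a^*ax = x^*a^* = (ax)^*,
\]
which is exactly condition (iii); here the second equality reuses the hypothesis $a^*ax=a^*$. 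Finally, substituting $ax=x^*a^*$ back into $a=x^*a^*a$ gives $axa=x^*a^*a=a$, which is condition (i). Hence $x$ is a $\{1,3\}$-inverse, completing the equivalence.

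For part (ii), I would run the symmetric computation: in the forward direction $a=aa^*x^*$ drops out immediately from $a^*x^*=(xa)^*=xa$ and $axa=a$, while in the converse one applies the involution to $a=aa^*x^*$ (getting $a^*=xaa^*$) and then \emph{left}-multiplies by $x$ to recover both $(xa)^*=xa$ and $axa=a$. I do not expect any genuine obstacle here — every step is a one-line manipulation. The only point demanding care is the bookkeeping of order-reversal in $(pq)^*=q^*p^*$, together with the structural observation that the single scalar-like equation $a^*=a^*ax$ must be made to deliver both defining conditions at once: the workable order is to extract the symmetry of $ax$ first (which gives condition (iii) for free) and then feed that back in to recover the inner-inverse equation $axa=a$.
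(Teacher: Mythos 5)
Your proof is correct in every step, but there is no internal argument in the paper to compare against: Lemma~\ref{1314} is quoted from Hartwig (the citation [p.~201] of the reference list), so you have supplied a proof where the paper gives only a pointer to the literature. Your verification is the standard direct one, and the structural point you flag is genuinely the only subtlety: from the single identity $a^*=a^*ax$ one must first recover the symmetry condition, by applying $*$ to get $a=x^*a^*a$, right-multiplying by $x$, and reusing the hypothesis to obtain $ax=x^*a^*=(ax)^*$, and only then feed this back in to get $axa=x^*a^*a=a$; attempting the two conditions in the other order goes nowhere. One simplification you could make for part (ii): rather than running the mirror-image computation, observe that $x$ is a $\{1,4\}$-inverse of $a$ exactly when $x^*$ is a $\{1,3\}$-inverse of $a^*$ (the conditions $axa=a$, $(xa)^*=xa$ transform under $*$ into $a^*x^*a^*=a^*$, $(a^*x^*)^*=a^*x^*$ and conversely), so (ii) is literally (i) applied to the pair $(a^*,x^*)$: the criterion $(a^*)^*=(a^*)^*a^*x^*$ is precisely $a=aa^*x^*$. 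This turns part (ii) into a one-line corollary of part (i) instead of a parallel calculation, and it makes your informal appeal to duality rigorous.
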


It is known that $a\in R^\dag$ if and only if $a\in aa^*R \cap
Ra^*a$ if and only if $a\in R^{(1,3)} \cap R^{(1,4)}$. In this case,
$a^\dag=a^{(1,4)}aa^{(1,3)}$. By Lemma \ref{1314}, we know that
$a=xa^*a=aa^*y$ implies $a\in R^\dag$ and $a^\dag=y^*ax^*$.

\begin{lemma} \label{star regular}{ \rm \cite[Theorems 2.16, 2.19 and 2.20]{Zhu and chen}} Let $S$ be a $*$-semigroup and let $a\in S$.
Then $a$ is Moore-Penrose invertible if and only if $a\in aa^*aS$ if
and only if $a\in Saa^*a$. Moreover, if $a=aa^*ax=yaa^*a$ for some
$x,y\in S$, then $a^\dag=a^*ax^2a^*=a^*y^2aa^*$.
 \end{lemma}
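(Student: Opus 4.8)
The plan is to prove the two absorption characterizations together with the explicit formula $a^\dag=a^*ax^2a^*$, and then recover the left-handed statements ($a\in Saa^*a$, $a^\dag=a^*y^2aa^*$) by applying the right-handed ones to $a^*$.

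First I would dispose of the forward direction, that Moore-Penrose invertibility forces $a\in aa^*aS$ and $a\in Saa^*a$. Assuming $a^\dag$ exists, I use the self-adjointness of $a^\dag a$ and $aa^\dag$. From $a^\dag a=a^*(a^\dag)^*$ one gets $a=aa^\dag a=aa^*(a^\dag)^*$, and since $(a^\dag)^*=(a^\dag)^*a^*(a^\dag)^*=aa^\dag(a^\dag)^*=a\,[a^\dag(a^\dag)^*]$, substitution yields $a=aa^*a\,[a^\dag(a^\dag)^*]$. Thus $x=a^\dag(a^\dag)^*$ witnesses $a\in aa^*aS$, and the dual computation gives $y=(a^\dag)^*a^\dag$ with $a=yaa^*a$.

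The heart is the converse together with the formula. Assume $(\ast)$: $a=aa^*ax$, and set $b=a^*ax^2a^*$; the goal is to verify the four Penrose equations. Taking adjoints in $(\ast)$ gives $a^*=x^*a^*aa^*$. Multiplying $(\ast)$ on the right by $ax$ and reusing $(\ast)$ produces the self-adjointness relation $a^*ax=x^*a^*a$; writing $m=a^*a$ this reads $mx=x^*m$, with $mx$ self-adjoint. The two absorption laws I would extract next are $m^2x=m$, immediate from $a^*aa^*ax=a^*(aa^*ax)=a^*a$, and its adjoint $x^*m^2=m$. With these the verification collapses. Computing $ba=mx^2m$ and iterating $mx=x^*m$ gives $mx^2=(x^*)^2m$, whence $ba=(x^*)^2m^2=x^*(x^*m^2)=x^*m=mx=a^*ax$; since this is self-adjoint we obtain (iv), and $aba=a(a^*ax)=aa^*ax=a$ is exactly $(\ast)$, giving (i). For (ii), $bab=(ba)b=(x^*m)(mx^2a^*)=(x^*m^2)x^2a^*=mx^2a^*=b$. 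For (iii) I avoid a direct symmetry argument and instead compute $a^*ab=m(mx^2a^*)=m^2x^2a^*=(mx)a^*=x^*ma^*=x^*a^*aa^*=a^*$, using the adjoint of $(\ast)$; by Lemma \ref{1314}(i) this says $b$ is a $\{1,3\}$-inverse, i.e. $(ab)^*=ab$. Hence $b=a^\dag=a^*ax^2a^*$.

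Finally, the statements involving $Saa^*a$ follow by applying what is now proved to $a^*$: the hypothesis $a=yaa^*a$ reads $a^*=a^*aa^*y^*=a^*(a^*)^*a^*y^*$, so $a^*$ satisfies an equation of type $(\ast)$ with witness $y^*$, giving $(a^*)^\dag=(a^*)^*a^*(y^*)^2(a^*)^*$; since $(a^*)^\dag=(a^\dag)^*$ this rearranges to $a^\dag=a^*y^2aa^*$, and uniqueness of the Moore-Penrose inverse reconciles the two formulas. I expect the main obstacle to be discovering the collapsing identities $m^2x=m$ and $x^*m^2=m$ and the self-adjointness $mx=x^*m$, and recognizing that they reduce the long product $a^*ax^2a^*a$ all the way down to $a^*ax$; everything else is bookkeeping once those are in place.
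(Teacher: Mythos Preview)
Your argument is correct. The paper does not supply its own proof of this lemma; it is quoted verbatim from \cite{Zhu and chen}, so there is no in-paper proof to compare against. That said, your route is essentially the standard one: from $a=aa^*ax$ you extract the three identities $m^2x=m$, $x^*m^2=m$, and $mx=x^*m$ (with $m=a^*a$), and these collapse $a^*ax^2a^*$ to the Moore--Penrose inverse via the four Penrose equations.

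One wording point worth tightening: the sentence ``Multiplying $(\ast)$ on the right by $ax$ and reusing $(\ast)$ produces the self-adjointness relation $a^*ax=x^*a^*a$'' is not quite what you do. What actually yields $mx=x^*m$ is multiplying the \emph{adjoint} of $(\ast)$, namely $a^*=x^*a^*aa^*$, on the right by $ax$ (or, equivalently, left-multiplying $(\ast)$ by $x^*a^*$): then $a^*ax=(x^*a^*aa^*)ax=x^*a^*(aa^*ax)=x^*a^*a$. Your verification of (iii) via $a^*ab=a^*$ and Lemma~\ref{1314}(i) is clean and avoids any direct manipulation of $ab$; note only that Lemma~\ref{1314} is stated in the paper for rings, but its proof uses nothing beyond the $*$-semigroup structure, so invoking it here is legitimate. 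The passage to the left-sided statement by applying the right-sided result to $a^*$ and using $(a^*)^\dag=(a^\dag)^*$ is standard and correct.
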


\begin{lemma} \label{Hartwig group inverse} {\rm \cite[Proposition 7]{Hartwig}} Let $a\in R$. Then $a\in R^\#$ if and only
if $a=a^2x=ya^2$ for some $x, y\in R$. In this case,
$a^\#=yax=y^2a=ax^2$.
\end{lemma}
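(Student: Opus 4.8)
The plan is to prove the two implications separately, extracting the explicit formula as a by-product of the harder (sufficiency) direction, and then to invoke uniqueness of the group inverse.

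For necessity, suppose $a\in R^\#$ and write $p=a^\#$. Since $ap=pa$ and $apa=a$, I would simply compute $a=(ap)a=(pa)a=pa^2$ and $a=a(pa)=a(ap)=a^2p$, so that $x=y=p=a^\#$ witness $a=a^2x=ya^2$. This direction is immediate from the defining equations of the group inverse.

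For sufficiency, assume $a=a^2x=ya^2$ for some $x,y\in R$. The key step — the one I expect to carry all the weight — is the identity $ya=ax$, obtained by cross-substituting one relation into the other: $ya=y(a^2x)=(ya^2)x=ax$. Writing $e:=ax=ya$, this single idempotent plays the role of $aa^\#=a^\#a$. From here everything is forced bookkeeping using $a^2x=a$ and $ya^2=a$: I would check that the three proposed expressions coincide, namely $yax=y^2a$ (replace the middle $a$ by $ya^2$ and cancel $a^2x=a$) and $yax=ax^2$ (replace the middle $a$ by $a^2x$ and cancel $ya^2=a$). Denote the common value by $b$.

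It then remains to verify the three defining equations for $b$. Computing $ab=a\cdot ax^2=a^2x^2=(a^2x)x=ax=e$ and $ba=y^2a\cdot a=y^2a^2=y(ya^2)=ya=e$ gives commutativity $ab=ba=e$ at once. For the outer equation, $aba=(ab)a=ea=ya\cdot a=ya^2=a$; for the inner one, $bab=b(ab)=be=(y^2a)(ax)=y^2a^2x=y^2a=b$. Thus $b$ satisfies $aba=a$, $bab=b$ and $ab=ba$, so $a$ is group invertible, and by uniqueness of the group inverse $a^\#=b=yax=y^2a=ax^2$. The only genuine idea is the cross-substitution yielding $ya=ax$; once that idempotent is identified, the remaining identities are mechanical applications of the two hypotheses.
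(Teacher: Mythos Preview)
Your proof is correct. The cross-substitution $ya=y(a^2x)=(ya^2)x=ax$ is indeed the pivotal observation, and from the resulting idempotent $e=ax=ya$ all three formulae and all three defining properties of the group inverse follow by the routine manipulations you indicate. There is nothing to compare against, however: in the paper this lemma is merely quoted from \cite[Proposition~7]{Hartwig} and no proof is supplied. Your argument is a faithful and self-contained verification of Hartwig's result.
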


\begin{lemma} \label{expression} {\rm \cite[Theorems 2.6 and 2.8]{Xu}} Let $a\in R$. Then

\emph{(i)} $a\in R^{(\#)}$ if and only if $a\in R^\# \cap
R^{(1,3)}$. In this case, $a^{(\#)}=a^\#aa^{(1,3)}$.

\emph{(ii)} $a\in R_{(\#)}$ if and only if $a\in R^\# \cap
R^{(1,4)}$. In this case, $a_{(\#)}=a^{(1,4)}aa^\#$.
\end{lemma}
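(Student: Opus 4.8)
The plan is to prove both equivalences by producing the explicit formula in the ``if'' direction and extracting the two factors in the ``only if'' direction; I would treat (i) in full and obtain (ii) as its dual under the involution (replace $\{1,3\}$ by $\{1,4\}$ and interchange left and right, using $a_{(\#)}=a^{(1,4)}aa^\#$ and the $\{1,4\}$ half of Lemma~\ref{1314}).

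For the ``if'' part of (i), I would just verify that $x:=a^\#aa^{(1,3)}$ is a core inverse. The two identities $ax=aa^{(1,3)}$ and $xa=a^\#a$ follow at once from $aa^\#a=a$ and $aa^{(1,3)}a=a$, and they give everything: $axa=a$ is immediate; $ax=aa^{(1,3)}$ is self-adjoint by Lemma~\ref{1314}; $x\in aR$ together with $a=xa^2$ (read off from $xa=a^\#a$ and $a^\#a\cdot a=a$) yields $xR=aR$; and $a^*=a^*ax$ together with $x=a^\#(ax)\in Ra^*$ yields $Rx=Ra^*$. Hence $x$ is a core inverse, and uniqueness forces $a^{(\#)}=a^\#aa^{(1,3)}$.

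For the ``only if'' part, let $x=a^{(\#)}$, so $axa=a$, $xR=aR$, $Rx=Ra^*$. I would first secure $a\in R^{(1,3)}$: writing $x=za^*$ (from $x\in Rx=Ra^*$) gives $xa=za^*a\in Ra^*a$, and since $axa=a$ one checks $R(xa)=Ra$, whence $a\in Ra^*a$ and Lemma~\ref{1314} produces a $\{1,3\}$-inverse. The conceptual heart of the argument is then to show that the core inverse is automatically a \emph{reflexive} $\{1,3\}$-inverse. Setting $p:=aa^{(1,3)}$, a self-adjoint idempotent with $pR=aR$ and $Rp=Ra^*$, I observe that the idempotent $ax$ satisfies $(ax)R=aR=pR$ and $R(ax)\subseteq Rx=Ra^*=Rp$; the elementary fact that two idempotents with $eR=pR$ and $Re\subseteq Rp$ must coincide then forces $ax=p$. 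Thus $ax$ is self-adjoint, so $x$ is a genuine $\{1,3\}$-inverse, and from $Rx=Rp$ with $p^2=p$ one gets $xp=x$, that is $xax=x$.

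Group invertibility now drops out quickly: $x\in aR$ gives $a=a^2(ya)\in a^2R$, while $a\in xR$, say $a=xu$, combined with reflexivity gives the clean identity $a=xu=(xax)u=xa\,(xu)=xa^2\in Ra^2$, so Hartwig's Lemma~\ref{Hartwig group inverse} yields $a\in R^\#$ and hence $a\in R^\#\cap R^{(1,3)}$. The step I expect to be the real obstacle is exactly the coincidence $ax=p$: proving that the \emph{two} ideal conditions conspire to make $ax$ self-adjoint and $x$ reflexive. Everything downstream is routine once this is in hand; in particular the identity $a=xa^2$ supplies precisely the left-hand relation $a\in Ra^2$ that $a\in a^2R$ alone cannot guarantee (one-sided shifts satisfy $a\in a^2R$ but not $a\in Ra^2$), which is why the self-adjointness/reflexivity step, rather than the final invocation of Lemma~\ref{Hartwig group inverse}, is where the genuine content lies.
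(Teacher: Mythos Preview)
The paper does not supply its own proof of this lemma; it is quoted from \cite{Xu}, so there is no in-paper argument against which to compare. Your proposal stands on its own and is correct.

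The ``if'' direction is routine once you record $ax=aa^{(1,3)}$ and $xa=a^\#a$; all four defining conditions of the core inverse follow exactly as you say. For the ``only if'' direction your identification of the crux is accurate: forcing the idempotent $e=ax$ to coincide with the self-adjoint idempotent $p=aa^{(1,3)}$ via the elementary observation that $eR=pR$ and $Re\subseteq Rp$ imply $pe=e$, $ep=p$, $ep=e$, hence $e=p$, is the real content. Once $ax=p$ and the reflexivity $xax=x$ (from $Rx=Rp$) are in hand, your chain $a=xu=(xax)u=xa(xu)=xa^2$ delivers $a\in Ra^2$ cleanly, and together with $a\in a^2R$ (from $x\in aR$) Lemma~\ref{Hartwig group inverse} finishes.

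An alternative available within the paper for the ``only if'' half is to invoke Lemma~\ref{core 5}: the five-equation characterization immediately shows that $x=a^{(\#)}$ is itself a $\{1,3\}$-inverse, while $xa^2=a$ and $ax^2=x$ give $a=xa^2\in Ra^2$ and $a=axa=a(ax^2)a=a^2(x^2a)\in a^2R$, whence $a^\#=x^2a$ by Lemma~\ref{Hartwig group inverse}. This bypasses the idempotent-comparison step entirely, at the cost of importing another cited result; your direct argument from the ideal-theoretic definition is more self-contained and makes transparent exactly where self-adjointness of $ax$ comes from.
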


\begin{lemma} {\rm \cite[Theorem 2.14]{Serbia} and \cite[Theorem 3.1]{Xu}} \label{core 5} Let $a\in R$. Then
 $a\in R^{(\#)}$ with core inverse $x$
if and only if $axa=a$, $xax=x$, $(ax)^*=ax$, $xa^2=a$ and $ax^2=x$
if and only if $(ax)^*=ax$, $xa^2=a$ and $ax^2=x$.
\end{lemma}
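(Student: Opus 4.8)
The statement is a chain of two equivalences, and I would prove it as the single cycle (five equations) $\Rightarrow$ (three equations) $\Rightarrow$ (core invertibility of $a$ with inverse $x$) $\Rightarrow$ (five equations). The step from the five equations to the three is vacuous, since $(ax)^*=ax$, $xa^2=a$ and $ax^2=x$ already occur among the five. So the genuine content lies in the other two implications, one of which recovers $axa=a$ and $xax=x$ from the three short equations, and the other of which produces the three equations from the abstract definition of the core inverse.

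For the direction from the three equations to core invertibility, I would first extract $axa=a$ and $xax=x$ by pure algebra, and then verify the definition of the core inverse directly. To obtain $axa=a$, substitute $a=xa^2$ into the last factor and collapse with $ax^2=x$ and $xa^2=a$:
\[ axa = ax(xa^2) = (ax^2)a^2 = xa^2 = a. \]
The key and least obvious point is $xax=x$, which I would get by evaluating the single product $xa^2x^2$ with two different associativity groupings,
\[ xa^2x^2 = (xa^2)x^2 = ax^2 = x, \qquad xa^2x^2 = (xa)(ax^2) = (xa)x = xax. \]
With $axa=a$ and $xax=x$ available, the defining conditions follow quickly: $x=ax^2\in aR$ together with $a=xa^2\in xR$ gives $xR=aR$, while $a^*=(axa)^*=a^*(ax)^*=a^*ax\in Rx$ and $x=xax=x(ax)^*=xx^*a^*\in Ra^*$ give $Rx=Ra^*$. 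Hence $x$ is the core inverse of $a$.

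For the direction from core invertibility back to the five equations, I would bypass the definition and invoke Lemma \ref{expression}(i), which expresses the core inverse as $x=a^\#aa^{(1,3)}$. Writing $g=a^\#$ and $b=a^{(1,3)}$, so $x=gab$, the group-inverse identities ($aga=a$, $ag=ga$, $ga^2=a$) and the $\{1,3\}$-identities ($aba=a$, $(ab)^*=ab$) reduce everything to short products: $ax=(aga)b=ab$, whence $(ax)^*=ax$ and $axa=aba=a$; $xa=g(aba)=ga$, whence $xa^2=ga^2=a$; and then $xax=g(ag)ab=g^2a^2b=gab=x$ and $ax^2=ab\cdot gab=(ab\cdot ga)b=gab=x$, the last using the collapse $ab\cdot ga=ab\cdot ag=(aba)g=ga$.

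The main obstacle is the self-referential nature of these manipulations: almost every naive attempt to check $xax=x$ (or $ax^2=x$) simply reproduces the same expression. The proof therefore hinges on isolating the one genuinely asymmetric manoeuvre in each direction — computing $xa^2x^2$ by two groupings in the first implication, and the reduction $ab\cdot ga=ga$ in the second. Once those are spotted, all remaining verifications are routine bookkeeping with the group-inverse and $\{1,3\}$-inverse relations.
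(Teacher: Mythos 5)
Your proof is correct, but there is nothing in the paper to compare it against: the paper never proves this lemma, it imports it wholesale from \cite{Serbia} (Theorem 2.14) and \cite{Xu} (Theorem 3.1). What you have produced is therefore a self-contained in-paper proof, and it holds up. The cycle (five equations) $\Rightarrow$ (three equations) $\Rightarrow$ (core invertibility of $a$ with inverse $x$) $\Rightarrow$ (five equations) does establish the full chain of equivalences; the first implication is indeed vacuous; the recovery of $axa=a$ by substituting $a=xa^2$, and of $xax=x$ via the two groupings of $xa^2x^2$, are both valid; and your verification of the defining conditions is exactly what is needed, since $x=ax^2$ and $a=xa^2$ give $xR=aR$, while $a^*=(axa)^*=a^*ax\in Rx$ and $x=xax=xx^*a^*\in Ra^*$ give $Rx=Ra^*$. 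For the converse you invoke Lemma \ref{expression}(i) together with uniqueness of the core inverse to write $x=a^\#aa^{(1,3)}$, and the computations $ax=ab$, $xa=ga$, $xax=g^2a^2b=gab$, and $ax^2=(ab\cdot ga)b=gab$ (using $ab\cdot ga=ab\cdot ag=(aba)g=ag=ga$) all check out. One caveat is worth flagging: Lemma \ref{expression} is itself only cited in this paper, and from \cite{Xu} — the same source as the lemma you are proving — so your argument is independent only relative to the paper's stated lemmas, not independent of \cite{Xu}; a proof avoiding that dependency would have to derive the five equations directly from the definition $axa=a$, $xR=aR$, $Rx=Ra^*$, as is done in \cite{Serbia}. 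Within the logical structure of this paper, however, your argument is complete and circularity-free.
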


\section{Double commutativity and reverse order law for core inverses}

First, we give the following lemma to prove the double commutativity
of core inverse.

\begin{lemma} \label{commute} Let $a,b,x\in R$ with $xa=bx$ and $xa^*=b^*x$. If $a,b\in R^{(1,3)}$, then
\begin{center}
$xaa^{(1,3)} = bb^{(1,3)}x$.
\end{center}
\end{lemma}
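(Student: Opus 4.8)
The plan is to abbreviate $p=aa^{(1,3)}$ and $q=bb^{(1,3)}$ and to prove the conclusion in the form $xp=qx$. First I would record the structure of these two elements. Since $a^{(1,3)}$ is a $\{1,3\}$-inverse, $p$ is a Hermitian idempotent with $pa=aa^{(1,3)}a=a$, and Lemma~\ref{1314}(i) gives $a^{*}=a^{*}aa^{(1,3)}=a^{*}p$; symmetrically $q^{*}=q=q^{2}$, $qb=b$ and $b^{*}=b^{*}q$. I would also take adjoints of the two hypotheses, turning $xa=bx$ into $a^{*}x^{*}=x^{*}b^{*}$ and turning $xa^{*}=b^{*}x$ into $ax^{*}=x^{*}b$; this last relation is the key extra ingredient.

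Next I would split the target into the two one-sided identities $qxp=xp$ and $qxp=qx$, whose combination yields $xp=qx$. For $qxp=xp$ I would substitute $xa=bx$ to get $xp=xaa^{(1,3)}=bxa^{(1,3)}$, and then apply $qb=b$ to obtain $qxp=q\,b\,xa^{(1,3)}=bxa^{(1,3)}=xp$. This half uses only the first hypothesis together with the absorption $qb=b$.

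The identity $qxp=qx$ is where I expect the real work to lie, and here the adjoint relation does the job; it is cleanest to prove the adjoint form $px^{*}q=x^{*}q$. Using $ax^{*}=x^{*}b$ I would rewrite $x^{*}q=x^{*}bb^{(1,3)}=(x^{*}b)b^{(1,3)}=ax^{*}b^{(1,3)}$, which visibly lies in $aR$; since $pa=a$, left multiplication by $p$ leaves it fixed, so $px^{*}q=pa\,x^{*}b^{(1,3)}=ax^{*}b^{(1,3)}=x^{*}q$. Taking adjoints of $px^{*}q=x^{*}q$ and using $p^{*}=p$ and $q^{*}=q$ returns $qxp=qx$.

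Combining the two identities gives $xp=qxp=qx$, that is $xaa^{(1,3)}=bb^{(1,3)}x$, as required. The main obstacle, and the precise point where \emph{both} hypotheses are genuinely needed, is the equality $qxp=qx$: the relation $xa=bx$ only governs how $x$ meets the left projection $q$, whereas controlling the right projection $p$ forces one to feed in the $*$-compatible hypothesis $xa^{*}=b^{*}x$, most conveniently through its adjoint $ax^{*}=x^{*}b$. Finally I would note that the argument uses nothing beyond the two fixed inner inverses $a^{(1,3)}$ and $b^{(1,3)}$, so the identity holds for the chosen $\{1,3\}$-inverses.
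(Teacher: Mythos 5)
Your proof is correct and follows essentially the same route as the paper: both arguments reduce the claim to the two identities $xaa^{(1,3)} = bb^{(1,3)}xaa^{(1,3)}$ (from $xa=bx$ together with $bb^{(1,3)}b=b$) and $bb^{(1,3)}x = bb^{(1,3)}xaa^{(1,3)}$ (from the $*$-hypothesis), then equate them. Your only deviation is cosmetic: you prove the second identity in its adjoint form $aa^{(1,3)}x^{*}bb^{(1,3)} = x^{*}bb^{(1,3)}$ using $ax^{*}=x^{*}b$, which is exactly the paper's computation with $bb^{(1,3)}=(b^{(1,3)})^{*}b^{*}$ read through the involution.
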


\begin{proof} From $xa=bx$, it follows that
\begin{eqnarray*}
  xaa^{(1,3)} &=&bxa^{(1,3)}=bb^{(1,3)}bxa^{(1,3)} \\
   &=& bb^{(1,3)}xaa^{(1,3)}.
\end{eqnarray*}

The condition $xa^*=b^*x$ implies that
\begin{eqnarray*}
  bb^{(1,3)}x &=& (b^{(1,3)})^*b^*x=(b^{(1,3)})^*xa^* \\
   &=& (b^{(1,3)})^*x(aa^{(1,3)}a)^*\\
   &=&(b^{(1,3)})^*xa^*aa^{(1,3)}\\
   &=&(b^{(1,3)})^*b^*xaa^{(1,3)}\\
   &=& bb^{(1,3)}xaa^{(1,3)}.
\end{eqnarray*}

Hence, $xaa^{(1,3)} = bb^{(1,3)}x$.
\end{proof}

\begin{theorem} \label{double commute} Let $a,b,x\in R$ with $xa=bx$ and $xa^*=b^*x$. If $a,b \in R^{(\#)}$, then
$xa^{(\#)}=b^{(\#)}x$.
\end{theorem}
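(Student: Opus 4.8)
The plan is to reduce everything to Lemma~\ref{expression}(i), which gives $a\in R^{\#}\cap R^{(1,3)}$ with $a^{(\#)}=a^{\#}aa^{(1,3)}$ and likewise $b^{(\#)}=b^{\#}bb^{(1,3)}$. With this factorization in hand, the identity $xa^{(\#)}=b^{(\#)}x$ will follow once I have two separate intertwining relations: the group relation $xa^{\#}=b^{\#}x$ and the Hermitian-projection relation $xaa^{(1,3)}=bb^{(1,3)}x$. Indeed, writing $xa^{(\#)}=x(a^{\#}aa^{(1,3)})=(xa^{\#})(aa^{(1,3)})$ and pushing $x$ through both factors gives $(b^{\#}x)(aa^{(1,3)})=b^{\#}(xaa^{(1,3)})=b^{\#}(bb^{(1,3)}x)=b^{(\#)}x$.

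The projection relation $xaa^{(1,3)}=bb^{(1,3)}x$ is exactly the conclusion of Lemma~\ref{commute}, and this is the only place where the second hypothesis $xa^{*}=b^{*}x$ enters; the group relation, by contrast, should need only $xa=bx$. Establishing $xa^{\#}=b^{\#}x$ from $xa=bx$ is the heart of the argument, so I would isolate it as a preliminary step. To do so, set $p=aa^{\#}=a^{\#}a$ and $q=bb^{\#}=b^{\#}b$ and first prove the idempotent intertwining $qx=xp$. Using $bx=xa$ one finds $qx=b^{\#}bx=b^{\#}xa$ and $xp=xaa^{\#}=bxa^{\#}$. Evaluating $qxp$ from the first expression and using $a^{2}a^{\#}=a$ gives $qxp=b^{\#}xa=qx$, while evaluating it from the second and using $qb=b$ gives $qxp=bxa^{\#}=xp$; hence $qx=xp$.

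From $qx=xp$, together with the bookkeeping identities $b^{\#}q=b^{\#}$ and $a(a^{\#})^{2}=a^{\#}$, I would then deduce $b^{\#}x=b^{\#}qx=b^{\#}xp=b^{\#}xaa^{\#}=(b^{\#}xa)a^{\#}=(xaa^{\#})a^{\#}=xa(a^{\#})^{2}=xa^{\#}$, where the substitution $b^{\#}xa=xaa^{\#}$ is just $qx=xp$ read through $bx=xa$. This gives the group relation, and combining it with Lemma~\ref{commute} via the computation displayed in the first paragraph closes the proof.

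The routine parts are the group-inverse identities ($a^{2}a^{\#}=a$, $a(a^{\#})^{2}=a^{\#}$, $qb=b$, $b^{\#}q=b^{\#}$) and the final chain of substitutions. The main obstacle is the group-inverse intertwining $xa^{\#}=b^{\#}x$: unlike the invertible case one cannot simply cancel by inverses, so the argument must route through the spectral idempotents $p,q$ and the two-sided evaluation of $qxp$ to pin down $qx=xp$ before $a^{\#}$ and $b^{\#}$ can be brought in. Once this relation is secured, everything else is direct.
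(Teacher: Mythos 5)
Your proof is correct, and its overall skeleton is the same as the paper's: both arguments factor the core inverses through Lemma \ref{expression} as $a^{(\#)}=a^\#aa^{(1,3)}$ and $b^{(\#)}=b^\#bb^{(1,3)}$, both invoke Lemma \ref{commute} for the projection relation $xaa^{(1,3)}=bb^{(1,3)}x$ (the only place the hypothesis $xa^*=b^*x$ is used), and both close with the identical chain $xa^{(\#)}=xa^\#aa^{(1,3)}=b^\#xaa^{(1,3)}=b^\#bb^{(1,3)}x=b^{(\#)}x$. The one genuine difference is how the group-inverse intertwining $xa^\#=b^\#x$ is obtained: the paper simply cites \cite[Theorem 2.2]{Drazin}, which delivers exactly this from $xa=bx$ and group invertibility, whereas you prove it from scratch via the spectral idempotents $p=aa^\#$, $q=bb^\#$. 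Your computation there is sound: $qxp=(b^\#xa)(aa^\#)=b^\#x(a^2a^\#)=b^\#xa=qx$ and $qxp=(qb)xa^\#=bxa^\#=xp$ give $qx=xp$, and the subsequent chain $b^\#x=b^\#qx=b^\#xaa^\#=(xaa^\#)a^\#=xa(a^\#)^2=xa^\#$ correctly uses $b^\#q=b^\#$ and $a(a^\#)^2=a^\#$. What your route buys is self-containedness: the reader need not consult Drazin's paper, and your organization makes explicit that the group part of the theorem needs only $xa=bx$, with the $*$-hypothesis entering solely through Lemma \ref{commute}. The cost is length; the paper's citation reduces the whole proof to three lines.
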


\begin{proof} As $a,b\in R^{(\#)}$, then  $a,b\in R^\#$ from Lemma \ref{expression}. Applying \cite[Theorem 2.2]{Drazin}, we
get $b^\#x=xa^\#$ since $xa=bx$.

So, $xa^{(\#)}=b^{(\#)}x$. Indeed,
$xa^{(\#)}=xa^\#aa^{(1,3)}=b^\#xaa^{(1,3)}=b^\#bb^{(1,3)}x=b^{(\#)}x$.
\end{proof}

\begin{remark} {\rm Theorem \ref{double commute} above can also been obtained from
\cite[Theorem 2.3]{Drazin}. Indeed, note in \cite[Theorem
4.4]{Serbia} that $a$ has ($a$, $a^*$)-inverse if and only if $a\in
R^{(\#)}$.}
\end{remark}

\begin{corollary} Let $a,x\in R$ with $xa=ax$ and $xa^*=a^*x$. If $a\in R^{(\#)}$, then
$xa^{(\#)}=a^{(\#)}x$.
\end{corollary}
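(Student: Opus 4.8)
The plan is to obtain this Corollary as the special case $b=a$ of Theorem \ref{double commute}. Substituting $b=a$ into the hypotheses of that theorem, the relations $xa=bx$ and $xa^*=b^*x$ become exactly $xa=ax$ and $xa^*=a^*x$, which are the assumptions here, while the requirement $a,b\in R^{(\#)}$ collapses to the single condition $a\in R^{(\#)}$. The conclusion $xa^{(\#)}=b^{(\#)}x$ then reads $xa^{(\#)}=a^{(\#)}x$, which is precisely what is to be proved. Thus the Corollary is a direct instance of the Theorem, and there is no genuine obstacle to overcome.

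For a self-contained derivation one may simply repeat the argument of Theorem \ref{double commute} with $b$ replaced by $a$. Since $a\in R^{(\#)}$ forces $a\in R^\#$ by Lemma \ref{expression}, the standard fact that the group inverse commutes with every element that commutes with $a$ yields $a^\# x=xa^\#$ from the hypothesis $xa=ax$. Next, applying Lemma \ref{commute} with $b=a$ (its hypotheses $xa=ax$ and $xa^*=a^*x$ are exactly those assumed, and $a\in R^{(1,3)}$ follows from $a\in R^{(\#)}$) gives $xaa^{(1,3)}=aa^{(1,3)}x$. Finally, using the representation $a^{(\#)}=a^\#aa^{(1,3)}$ from Lemma \ref{expression}, one computes
\[
xa^{(\#)}=xa^\#aa^{(1,3)}=a^\#xaa^{(1,3)}=a^\#aa^{(1,3)}x=a^{(\#)}x,
\]
which establishes the claim.
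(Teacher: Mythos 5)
Your proposal is correct and coincides with the paper's intent: the Corollary is stated there without proof precisely because it is the special case $b=a$ of Theorem \ref{double commute}, which is exactly how you derive it. Your expanded second paragraph simply re-runs the theorem's own argument (Lemma \ref{expression}, commutation with the group inverse, Lemma \ref{commute}, and the representation $a^{(\#)}=a^\#aa^{(1,3)}$) with $b$ replaced by $a$, so there is no substantive difference in approach.
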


In 2012, Baksalary and Trenkler \cite{Baksalary} asked the following
question: Given complex matrices $A$ and $B$, if $A^{(\#)}$,
$B^{(\#)}$ and $(AB)^{(\#)}$ exist, does it follow that
$(AB)^{(\#)}= B^{(\#)}A^{(\#)}$? Later, Cohen, Herman and Jayaraman
\cite{Cohen Herman} presented several counterexamples for this
problem.

Next, we show that the reverse order law for the core inverse holds
under certain conditions in a general ring case.

\begin{theorem} \label{reverse order} Let $a,b\in R^{(\#)}$ with $ab=ba$ and $ab^*=b^*a$. Then
$ab \in R^{(\#)}$ and
$(ab)^{(\#)}=b^{(\#)}a^{(\#)}=a^{(\#)}b^{(\#)}$.
\end{theorem}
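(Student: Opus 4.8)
The plan is to use the three-identity characterization of Lemma~\ref{core 5}: to show $ab\in R^{(\#)}$ I would produce the candidate $y:=b^{(\#)}a^{(\#)}$ and verify the defining equations $(ab\,y)^{*}=ab\,y$, $y(ab)^{2}=ab$ and $(ab)y^{2}=y$; by uniqueness of the core inverse this identifies $(ab)^{(\#)}$ with $y$. The whole argument is driven by a package of commutation relations, so I would spend most of the effort assembling these before touching the three defining equations.

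First I would unwind the hypotheses under the involution. From $ab^{*}=b^{*}a$ I get $ba^{*}=a^{*}b$, and from $ab=ba$ I get $b^{*}a^{*}=a^{*}b^{*}$; thus each of $a,a^{*}$ commutes with each of $b,b^{*}$. Next I would promote these to the generalized inverses. The group inverse is a double-commutant object (the Drazin result already invoked in the proof of Theorem~\ref{double commute}), so $b$ and $b^{*}$ commute with $a^{\#}$, and $a,a^{*}$ commute with $b^{\#}$. For the Hermitian idempotents $p_{a}:=aa^{(\#)}=aa^{(1,3)}$ and $p_{b}:=bb^{(\#)}=bb^{(1,3)}$ I would invoke Lemma~\ref{commute}: applied with the auxiliary element $b$ (resp.\ $b^{*}$) it yields $bp_{a}=p_{a}b$ (resp.\ $b^{*}p_{a}=p_{a}b^{*}$), and symmetrically for $p_{b}$. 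Writing $a^{(\#)}=a^{\#}p_{a}$ and combining, I obtain that $a^{(\#)}$ commutes with $b$, $b^{*}$, $b^{\#}$ and $p_{b}$; one further pass of Lemma~\ref{commute} (with $x=p_{a}$, resp.\ $x=a^{(\#)}$) gives the two relations I really need, namely $p_{a}p_{b}=p_{b}p_{a}$ and $a^{(\#)}b^{(\#)}=b^{(\#)}a^{(\#)}$, the latter establishing the claimed symmetry $b^{(\#)}a^{(\#)}=a^{(\#)}b^{(\#)}$.

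With these in hand the verification is mechanical. Using $ab=ba$ one has $(ab)^{2}=a^{2}b^{2}$; then $y(ab)^{2}=b^{(\#)}a^{(\#)}a^{2}b^{2}=b^{(\#)}ab^{2}=ab^{(\#)}b^{2}=ab$ via $a^{(\#)}a^{2}=a$, $b^{(\#)}b^{2}=b$ and the commutations, and similarly $(ab)y^{2}=y$ using $a(a^{(\#)})^{2}=a^{(\#)}$ and $b(b^{(\#)})^{2}=b^{(\#)}$. For the Hermitian condition I would compute $(ab)y=a\,p_{b}\,a^{(\#)}=p_{b}\,p_{a}$, which equals $p_{a}p_{b}$ since $p_{a}$ and $p_{b}$ commute; being a product of two commuting Hermitian idempotents, it is Hermitian. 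Lemma~\ref{core 5} then finishes the proof.

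I expect the main obstacle to be precisely the bootstrapping of commutativity up to the Hermitian projections $p_{a},p_{b}$: commuting with $a$ and $b$ alone is not enough, and it is the $*$-hypothesis $ab^{*}=b^{*}a$ together with the repeated application of Lemma~\ref{commute} that forces $p_{a}p_{b}=p_{b}p_{a}$ — exactly the fact that makes the $\{1,3\}$-equation $(ab\,y)^{*}=ab\,y$ hold. The group-inverse and range identities are comparatively routine once the $*$-commutations are set up.
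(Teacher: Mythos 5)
Your proposal is correct, and the verification goes through: the commutation bootstrapping (Drazin's commutation theorem for the group inverses, Lemma~\ref{commute} for the Hermitian idempotents $p_a=aa^{(\#)}$, $p_b=bb^{(\#)}$, and a final pass to get $p_ap_b=p_bp_a$ and $a^{(\#)}b^{(\#)}=b^{(\#)}a^{(\#)}$) is sound, and each of the three equations $(ab\,y)^*=ab\,y$, $y(ab)^2=ab$, $(ab)y^2=y$ checks out for $y=b^{(\#)}a^{(\#)}$. The difference from the paper lies in how the candidate is certified as the core inverse. The paper sets up the same commutation relations (via Theorem~\ref{double commute}, which is the packaged form of the Drazin argument you re-derive, plus Lemma~\ref{commute}), but then verifies the original Raki\'{c}--Din\v{c}i\'{c}--Djordjevi\'{c} definition: it checks $ab\cdot y\cdot ab=ab$ and establishes the two range equalities $abR=yR$ and $Ry=R(ab)^*$ by exhibiting explicit factorizations $y=(\cdots)(ab)$, $ab=y(\cdots)$, $y=(\cdots)(ab)^*$, $(ab)^*=(\cdots)y$, which makes for several lines of nontrivial computation. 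Your route instead invokes the purely equational characterization of Lemma~\ref{core 5}, which trades those factorizations for three mechanical identities once the commutations are in hand; this is arguably cleaner and less error-prone, at the cost of front-loading slightly more commutation relations (you need $a^{(\#)}$ to commute with $p_b$ and $b^\#$ explicitly, and $p_ap_b=p_bp_a$ for the Hermitian condition, whereas the paper inlines what it needs). One small bookkeeping remark: the final pass of Lemma~\ref{commute} with $x=a^{(\#)}$ literally yields $a^{(\#)}p_b=p_ba^{(\#)}$ rather than $a^{(\#)}b^{(\#)}=b^{(\#)}a^{(\#)}$ itself; the latter then follows by writing $b^{(\#)}=b^\#p_b$ and using the already established commutation of $a^{(\#)}$ with $b^\#$ (or, as the paper does, by one direct application of Theorem~\ref{double commute} with $x=a^{(\#)}$), so the gap is cosmetic, not substantive.
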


\begin{proof} It follows from Theorem \ref{double commute} that $b^{(\#)}a=ab^{(\#)}$ and
$a^{(\#)}b=ba^{(\#)}$.

Also, the conditions $b^*a=ab^*$ and $a^*b^*=b^*a^*$ guarantee that
$b^*a^{(\#)}=a^{(\#)}b^*$, which together with $a^{(\#)}b=ba^{(\#)}$
imply $a^{(\#)}b^{(\#)}=b^{(\#)}a^{(\#)}$ according to Theorem
\ref{double commute}.

Once given the above conditions, it is straightforward to check

(1) By Lemma \ref{commute}, we have $abb^{(1,3)}=bb^{(1,3)}a$.
Hence,
$abb^{(\#)}a^{(\#)}ab=abb^{(1,3)}aa^\#b=bb^{(1,3)}aaa^\#b=bb^{(1,3)}ba=ab$.

(2) Since $abb^{(1,3)}=bb^{(1,3)}a$, it follows that
\begin{eqnarray*}
b^{(\#)}a^{(\#)}&=&b^\#bb^{(1,3)}a^\#aa^{(1,3)}=b^\#bb^{(1,3)}aa^\#a^{(1,3)}\\
&=&b^\#abb^{(1,3)}a^\#a^{(1,3)}= ab^\#bb^{(1,3)}a^\#a^{(1,3)}\\
&=&a bb^\#b^{(1,3)}a^\#a^{(1,3)}
\end{eqnarray*}
and
\begin{eqnarray*}
ab&=&b^\#b^2a=b^\#bb^{(1,3)}b^2a\\
&=&b^{(\#)}ab^2=b^{(\#)}a^\#aa^{(1,3)}a^2b^2\\
&=&b^{(\#)}a^{(\#)}a^2b^2.
\end{eqnarray*}

Hence, $abR=b^{(\#)}a^{(\#)}R$.

(3) If $x$ in Lemma \ref{commute} is group invertible, then
$aa^{(1,3)} x^\#= x^\#aa^{(1,3)}$. We have

\begin{eqnarray*}
b^{(\#)}a^{(\#)}&=&b^\#bb^{(1,3)}a^\#aa^{(1,3)}=b^\#a^\#bb^{(1,3)}aa^{(1,3)}\\
&=&b^\#a^\#(aa^{(1,3)}bb^{(1,3)})^*=b^\#a^\#(baa^{(1,3)}b^{(1,3)})^*\\
&=&b^\#a^\#(a^{(1,3)}b^{(1,3)})^*(ab)^*
\end{eqnarray*}
and
\begin{eqnarray*}
(ab)^*&=&b^*a^*aa^{(1,3)}=a^* b^*a
a^{(1,3)}=a^*b^*bb^{(1,3)}aa^{(1,3)}\\
&=&b^*a^*aa^\#abb^{(1,3)}a^{(1,3)}=b^*a^*abb^{(1,3)}a^\#aa^{(1,3)}\\
&=&b^*a^*abb^\#bb^{(1,3)} a^\#aa^{(1,3)}\\
&=&b^*a^*ab b^{(\#)}a^{(\#)}.
\end{eqnarray*}

Thus, $Rb^{(\#)} a^{(\#)}=R(ab)^*$.

So, $ab\in R^{(\#)}$ and
$(ab)^{(\#)}=b^{(\#)}a^{(\#)}=a^{(\#)}b^{(\#)}$.
\end{proof}

\section{Characterizations of core inverses by units}

In this section, we give existence criteria for the core inverse of
ring elements in terms of units. Representations based on classical
inverses are also given. By duality, all the results apply to the
dual core inverse.

We now present an existence criterion of group inverse of a regular
element.

\begin{proposition} \label{group inverse}
Let $k\geq 1$ be an integer and suppose that $a\in R$ is regular
with an inner inverse $a^{-}$. Then the following conditions are
equivalent{\rm :}

\emph{(i)} $a\in R^\#$.

\emph{(ii)} $u=a^k+1-aa^{-}\in R^{-1}$.

\emph{(iii)} $v=a^k+1-a^{-}a\in R^{-1}$.

In this case, $a^\#=u^{-1}a^{2k-1}v^{-1}$.

\end{proposition}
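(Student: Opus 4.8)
The plan is to establish the cycle of implications $(i)\Rightarrow(ii)\Rightarrow(iii)\Rightarrow(i)$ (or prove each of $(ii),(iii)$ equivalent to $(i)$ directly), while exploiting the symmetry between $(ii)$ and $(iii)$ under the inner-inverse relation $a a^{-}$ versus $a^{-}a$. The key structural facts I would lean on are Lemma~\ref{Hartwig group inverse}, which says $a\in R^\#$ iff $a=a^2s=ta^2$ for some $s,t$, and Lemma~\ref{Jlemma}, which lets me transport invertibility of $1+xy$ to $1+yx$. The regularity hypothesis $a a^{-}a=a$ is what makes $aa^{-}$ and $a^{-}a$ idempotents, and these idempotents are exactly the ``correction terms'' $1-aa^{-}$ and $1-a^{-}a$ appearing in $u$ and $v$.

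First I would prove $(i)\Rightarrow(ii)$. Assuming $a\in R^\#$, I have the group inverse $a^\#$ with $aa^\#=a^\#a$, and $a^\#$ commutes with $a^k$. The natural guess for the inverse of $u=a^k+1-aa^{-}$ is built from $(a^\#)^k$ together with a correction accounting for the difference between the idempotent $aa^\#$ and the idempotent $aa^{-}$. Concretely, since $aa^{-}a=a$ gives $aa^{-}\cdot aa^\# = aa^\#$ and $aa^\#\cdot aa^{-}=aa^{-}$, I would verify that $u$ is invertible by checking that multiplying $u$ by a candidate $(a^\#)^k aa^\# + (1-aa^{-})$ (and symmetrically on the other side, noting $u(1-aa^{-}) = 1-aa^{-}$ since $a^k(1-aa^{-})=0$) yields $1$ on both sides. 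The point is that $u$ acts invertibly on the ``range'' piece $aa^\#R$ via $(a^\#)^k$ and acts as the identity on the complementary piece determined by $1-aa^{-}$. The analogous computation gives $(i)\Rightarrow(iii)$ using $1-a^{-}a$ on the left.

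For the converse direction $(ii)\Rightarrow(i)$, I would use Lemma~\ref{Hartwig group inverse}: it suffices to produce $s$ with $a=a^2 s$ and $t$ with $a=ta^2$. From $u\in R^{-1}$ I have $u u^{-1}=u^{-1}u=1$, and I would multiply these relations by $a$ on appropriate sides. Since $a(1-aa^{-})=a-a a^{-}a \cdot\, a^{-}\cdots$—more cleanly, $a\cdot(1-aa^{-}) = a - (aa^{-}a)a^{-}\cdot$ is not quite it, so instead note $(1-aa^{-})a = a-aa^{-}a=0$, hence $ua = a^{k+1}$, giving $a^{k+1}u^{-1}=a$, i.e. $a = a^{k+1}u^{-1}= a^2(a^{k-1}u^{-1})$ when $k\ge1$; this exhibits $a=a^2 s$ with $s=a^{k-1}u^{-1}$. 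Symmetrically, from $(iii)$ with $(1-a^{-}a)$ annihilating $a$ on the left, $au=\ldots$ should be replaced by the correct one-sided identity $a u = a\,a^k = a^{k+1}$ as well, but to get $a=ta^2$ I would instead work with $vau$-type products or use that $v^{-1}a^{k+1}=a$ yields $a = (v^{-1}a^{k-1})a^2$. Either way, combining the one-sided inverse extracted from $(ii)$ with the one extracted from $(iii)$ supplies both $a=a^2s$ and $a=ta^2$, so Lemma~\ref{Hartwig group inverse} gives $a\in R^\#$. The same lemma then yields the formula $a^\#=u^{-1}a^{2k-1}v^{-1}$, which I would verify directly by substituting $u^{-1}a^{k}= a\cdot(\text{something})$ relations.

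The main obstacle I anticipate is the bookkeeping in the converse: deducing $a\in R^\#$ from invertibility of $u$ \emph{alone} (just $(ii)$) rather than from $(ii)$ and $(iii)$ together. Since Lemma~\ref{Hartwig group inverse} needs both a right factorization $a=a^2s$ and a left factorization $a=ta^2$, and $(ii)$ naturally gives only the right one via $ua=a^{k+1}$, I expect the real work is showing $(ii)\Leftrightarrow(iii)$ independently—most likely by applying Lemma~\ref{Jlemma} (Jacobson's Lemma) to relate $1+$(terms built from $aa^{-}$) and $1+$(terms built from $a^{-}a$), or by a direct dual computation. Once $(ii)$ and $(iii)$ are shown equivalent, the rest is routine, and the final verification of the representation $a^\#=u^{-1}a^{2k-1}v^{-1}$ reduces to checking the three defining group-inverse equations using the commuting idempotents $aa^\#,\,a^\#a$ and the annihilation identities $(1-aa^{-})a=0=a(1-a^{-}a)$.
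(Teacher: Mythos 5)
Your overall architecture is the same as the paper's: an explicit candidate inverse for (i)$\Rightarrow$(ii), Jacobson's Lemma (Lemma \ref{Jlemma}) for (ii)$\Leftrightarrow$(iii), and then combining the two one-sided factorizations coming from $u$ and $v$ with Lemma \ref{Hartwig group inverse} to obtain (i) together with the formula $a^\#=u^{-1}a^{2k-1}v^{-1}$. However, your proof of (i)$\Rightarrow$(ii) contains a genuine error. The identity you invoke, $a^k(1-aa^{-})=0$, is false: regularity only gives $(1-aa^{-})a=0$, i.e.\ $1-aa^{-}$ annihilates $a$ \emph{from the left}; on the other side $a^k(1-aa^{-})=a^k-a^{k+1}a^{-}$ need not vanish. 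Consequently your candidate $(a^\#)^kaa^\#+(1-aa^{-})$, which simplifies to $(a^\#)^k+1-aa^{-}$, is in general not an inverse of $u$. Concretely, in $M_2(\mathbb{C})$ take $a=\left[\begin{smallmatrix}1&0\\0&0\end{smallmatrix}\right]$, $a^{-}=\left[\begin{smallmatrix}1&1\\0&0\end{smallmatrix}\right]$ and $k=1$: then $a^\#=a$, $u=\left[\begin{smallmatrix}1&-1\\0&1\end{smallmatrix}\right]$, your candidate equals $u$ itself, and $u^2\neq 1$. The flaw in the block-decomposition intuition is that $u$ is only ``triangular,'' not ``diagonal,'' with respect to the idempotents $aa^\#$ and $aa^{-}$, which do not commute. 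The paper's candidate is the asymmetric element $a(a^\#)^ka^{-}+1-aa^\#$: the factor $a^{-}$ on the right produces $a^{k+1}(a^\#)^ka^{-}=aa^{-}$, which cancels against $1-aa^{-}$, and the complementary idempotent used is $1-aa^\#$, which \emph{is} annihilated by $a^k$ on the left ($a^k(1-aa^\#)=0$); both features are essential.

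In the converse direction you also have the sides backwards. From $ua=a^{k+1}$, left multiplication by $u^{-1}$ gives $a=u^{-1}a^{k+1}=(u^{-1}a^{k-1})a^2\in Ra^2$; it does not give ``$a^{k+1}u^{-1}=a$'' --- you cannot move $u^{-1}$ across $a$ --- so (ii) supplies the \emph{left} factorization $a=ta^2$, not $a=a^2s$. Dually, $av=a^{k+1}$ (from $a(1-a^{-}a)=0$) gives $a=a^{k+1}v^{-1}=a^2(a^{k-1}v^{-1})\in a^2R$, the right factorization. Once these are corrected, your plan coincides with the paper's proof: Jacobson's Lemma gives (ii)$\Leftrightarrow$(iii), the two factorizations give $a\in R^\#$ by Lemma \ref{Hartwig group inverse}, and that same lemma's formula $a^\#=yax$ with $y=u^{-1}a^{k-1}$ and $x=a^{k-1}v^{-1}$ yields $a^\#=u^{-1}a^{2k-1}v^{-1}$ directly, with no separate verification of the three group-inverse equations needed.
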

\begin{proof}
(i) $\Rightarrow$ (ii). Since
 \begin{eqnarray*}
 u(a(a^\#)^ka^{-}+1-aa^\#)&=& (a^k+1-aa^{-})(a(a^\#)^ka^{-}+1-aa^\#) \\
 &=& a^{k+1}(a^\#)^ka^{-}+1-aa^{-}\\
 &=& aa^{-}+1-aa^{-}\\
 &=&1,
 \end{eqnarray*}
it follows that $u$ is right invertible.

Similarly, we can prove $(a(a^\#)^ka^{-}+1-aa^\#)u=1$, i.e., $u$ is
left invertible.

Hence, $u=a^k+1-aa^{-}\in R^{-1}$.

(ii) $\Leftrightarrow$ (iii). Note that $u=1+a(a^{k-1}-a^{-})\in
R^{-1}$ if and only if $1+(a^{k-1}-a^{-})a=v\in R^{-1}$.

(iii) $\Rightarrow$ (i). As $v \in R^{-1}$, then $u\in R^{-1}$.
Since $ua=a^{k+1}=av$, it follows that $a=a^{k+1}v^{-1} =
u^{-1}a^{k+1} \in a^2R \cap Ra^2$, i.e., $a\in R^\#$.

Note that $a=u^{-1}a^{k-1}a^2=a^2a^{k-1}v^{-1}\in a^2R \cap Ra^2$.
It follows from Lemma \ref{Hartwig group inverse} that
$a^\#=u^{-1}a^{k-1}aa^{k-1}v^{-1}=u^{-1}a^{2k-1}v^{-1}$.
\end{proof}

\begin{theorem} \label{core inverse} Let $a\in R$ be regular. Then the following conditions are equivalent{\rm :}

\emph{(i)} $a\in R^{(\#)}$.

\emph{(ii)} $a+1-aa^{-}$ and $a^*+1-aa^{-}$ are invertible for some
$a^{-}\in a\{1\}$.

\emph{(iii)} $a+1-aa^{-}$ is invertible and $a^*+1-aa^{-}$ is left
invertible for some $a^{-}\in a\{1\}$.

\emph{(iv)} $a^*a+1-aa^{-}$ and $(a^*)^2+1-aa^{-}$ are invertible
for some $a^{-}\in a\{1\}$.

\emph{(v)} $a^*a+1-aa^{-}$ and $(a^*)^2+1-aa^{-}$ are left
invertible for some $a^{-}\in a\{1\}$.

\emph{(vi)} $a+1-aa^{-}$ and $(a^*)^2+1-aa^{-}$ are left invertible
for some $a^{-}\in a\{1\}$.

In this case,
\begin{eqnarray*}
a^{(\#)}&=&(a^*a+1-aa^{-})^{-1}a^*=a[((a^*)^2+1-aa^{-})^{-1}]^*\\
&=&(a+1-aa^{-})^{-1}a((a^*+1-aa^{-})^{-1})^*.
\end{eqnarray*}
\end{theorem}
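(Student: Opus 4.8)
The plan is to reduce everything to the two ingredients in Lemma \ref{expression}(i): $a\in R^{(\#)}$ iff $a\in R^\#\cap R^{(1,3)}$. The group-invertibility half is already packaged by Proposition \ref{group inverse} with $k=1$, namely $a\in R^\#$ iff $a+1-aa^-$ is invertible, which disposes of every occurrence of $a+1-aa^-$. For the $\{1,3\}$ half I would isolate a single computational engine. Writing $e=aa^-$, the only identity I need is $ea=a$; hence if $b+1-e$ is left invertible, say $s(b+1-e)=1$, then multiplying on the right by $a$ and using $ea=a$ collapses the middle terms to give $a=sba$. Applying this with $b=a^*$ yields $a=sa^*a\in Ra^*a$, and with $b=(a^*)^2$ yields $a=sa^*(a^*a)\in Ra^*a$; by Lemma \ref{1314}(i) either one certifies $a\in R^{(1,3)}$, with an explicit $\{1,3\}$-inverse read off from $s$. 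The same engine applied to $b=a$ or $b=a^*a$ instead produces only the weaker membership $a\in Ra^2$.

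With these pieces the clean cycle is $(i)\Rightarrow(ii)\Rightarrow(iii)\Rightarrow(i)$. Here $(ii)\Rightarrow(iii)$ is trivial, and $(iii)\Rightarrow(i)$ is immediate: invertibility of $a+1-aa^-$ gives $a\in R^\#$ (Proposition \ref{group inverse}) while left invertibility of $a^*+1-aa^-$ gives $a\in R^{(1,3)}$ (the engine), so $a\in R^\#\cap R^{(1,3)}=R^{(\#)}$. For the remaining conditions I would run a second cycle through (iv)–(vi). In that cycle the engine extracts $R^{(1,3)}$ from the $(a^*)^2+1-aa^-$ factor and the half $a\in Ra^2$ from the $a^*a+1-aa^-$ (or $a+1-aa^-$) factor; the remaining task is to upgrade this to full group invertibility, i.e. to produce the missing half $a\in a^2R$ (equivalently, passing to adjoints, $a^*\in R(a^*)^2$).

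For the converse $(i)\Rightarrow(ii),(iv)$ I would argue by explicit construction. Assuming $a\in R^{(\#)}$, so that $a^\#$, $a^{(1,3)}$ and $a^{(\#)}=a^\#aa^{(1,3)}$ are all available, I would write down candidate two-sided inverses of $a+1-aa^-$, $a^*+1-aa^-$, $a^*a+1-aa^-$ and $(a^*)^2+1-aa^-$ in terms of $a^\#$, $a^{(1,3)}$ and $a^*$, and verify them by direct multiplication (using that $aa^{(1,3)}$ is self-adjoint together with the defining identities of $a^\#$ and $a^{(1,3)}$). The same computation simultaneously yields the representation formulas: I would check that $x=(a^*a+1-aa^-)^{-1}a^*$ and its two companions satisfy the three identities of Lemma \ref{core 5}, namely $(ax)^*=ax$, $xa^2=a$ and $ax^2=x$, which identifies each expression with $a^{(\#)}$.

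The main obstacle is the forward direction for (iv)–(vi), recovering group invertibility from the starred conditions. The difficulty is a genuine left/right asymmetry: $e=aa^-$ satisfies $ea=a$ but not $ae=a$, and the idempotent naturally attached to $a^*$ is $e^*$ rather than $e$, so the $a^*$-expressions telescope on the right whereas the available cancellation $(1-e)a=0$ lives on the left. Consequently the mere ideal-membership consequences of the engine are not enough: the unilateral shift $a$ (with $a^-=a^*$) is regular and lies in $R^{(1,3)}\cap Ra^2$ yet is not group invertible, and it is rejected by (iv)–(vi) only through the $(a^*)^2+1-aa^-$ factor. The crux is therefore to exploit the full strength of left invertibility, that is the vanishing of the pertinent right annihilators, of the $(a^*)^2$- and $a^*a$-factors jointly, rather than only the elements they generate, in order to force $a\in a^2R$. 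I expect this to be where the real work lies, the remainder being the bookkeeping outlined above.
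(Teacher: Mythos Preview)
Your cycle $(\mathrm{i})\Leftrightarrow(\mathrm{ii})\Leftrightarrow(\mathrm{iii})$ is exactly the paper's. For $(\mathrm{i})\Rightarrow(\mathrm{ii}),(\mathrm{iv})$, however, the paper does not build inverses by hand: since the statement only asks for \emph{some} $a^-$, it takes $a^-\in a\{1,3\}$, so that $aa^-$ is self-adjoint. Then $a^*+1-aa^-=(a+1-aa^-)^*$, $(a^*)^2+1-aa^-=(a^2+1-aa^-)^*$, and $a^*a+1-aa^-=(a^*+1-aa^-)(a+1-aa^-)$, and all four invertibilities drop out of Proposition~\ref{group inverse} with $k=1,2$. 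Your explicit-construction route would also work \emph{provided} you make this same choice of $a^-$; for a generic inner inverse it can fail, as Example~\ref{ex} shows, so the choice is not a detail.

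The genuine gap is the one you flag yourself: you do not supply the step recovering $a\in a^2R$ in $(\mathrm{v}),(\mathrm{vi})\Rightarrow(\mathrm{i})$. The paper's device is a one-line self-substitution. For $(\mathrm{vi})$: left invertibility of $(a^*)^2+1-aa^-$ gives $a=t(a^*)^2a$; starring yields $a^*=a^*a^2t^*$, and feeding this back in gives
\[
a=ta^*\cdot a^*a=ta^*\cdot (t(a^*)^2a)^*\cdot a=(t(a^*)^2a)\,a t^* a=a^2t^*a\in a^2R,
\]
which together with $a=sa^2\in Ra^2$ gives $a\in R^\#$. For $(\mathrm{v})$ the paper bypasses $R^\#$ altogether: from $a=ma^*a^2$ and $a=n(a^*)^2a$ one computes $ma^*=m(n(a^*)^2a)^*=(ma^*a^2)n^*=an^*$; setting $x:=ma^*=an^*$, the relation $a=(na^*)a^*a=x^*a^*a$ makes $x$ a $\{1,3\}$-inverse (Lemma~\ref{1314}), and then $xa^2=ma^*a^2=a$ and $ax^2=(axa)n^*=an^*=x$ verify Lemma~\ref{core 5}. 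This simultaneously proves $(\mathrm{v})\Rightarrow(\mathrm{i})$ and delivers the first two representation formulae $a^{(\#)}=ma^*=an^*$; the third comes from $a^{(\#)}=a^\#aa^{(1,3)}$ with $a^\#=(a+1-aa^-)^{-2}a$ and $a^{(1,3)}=((a^*+1-aa^-)^{-1})^*$. So the ``real work'' you anticipated reduces to substituting the identity $a=t(a^*)^2a$ into itself.
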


\begin{proof} (i) $\Rightarrow$ (ii). Since $a\in R^{(\#)}$ then $a\in R^\# \cap R^{(1,3)}$ by Lemma \ref{expression}.
Let $a^{-} \in a\{1,3\}$. Then $a+1-aa^{-}$ is invertible by
Proposition \ref{group inverse} and hence
$a^*+1-aa^{-}=(a+1-aa^{-})^*$ is invertible.

(ii) $\Rightarrow$ (iii) is clear.

(iii) $\Rightarrow$ (i). As $a^*+1-aa^{-}$ is left invertible, then
there exists $s\in R$ such that $s(a^*+1-aa^{-})=1$. Hence,
$a=s(a^*+1-aa^{-})a=sa^*a\in Ra^*a$, i.e., $a^{(1,3)}$ exists by
Lemma \ref{1314}(i). Also, $a+1-aa^{-}\in R^{-1}$ implies that
$a^\#$ exists by Proposition \ref{group inverse}. So, $a\in
R^{(\#)}$ by Lemma \ref{expression}.

(i) $\Rightarrow$ (iv). Let $a^{-} \in a\{1,3\}$. Then $a+1-aa^{-}$
and $a^*+1-aa^{-}$ are invertible. Hence,
$a^*a+1-aa^{-}=(a^*+1-aa^{-})(a+1-aa^{-})$ is invertible.

Also, it follows from Proposition \ref{group inverse} that
$a^2+1-aa^{-}\in R^{-1}$ since $a\in R^\#$. So,
$(a^*)^2+1-aa^{-}=(a^2+1-aa^{-})^*\in R^{-1}$.

(iv) $\Rightarrow$ (v). Clearly.

(v) $\Rightarrow$ (i). Since $a^*a+1-aa^{-}$ and $(a^*)^2+1-aa^{-}$
are both left invertible, there exist $m,n\in R$ such that
$m(a^*a+1-aa^{-})=1=n((a^*)^2+1-aa^{-})$. As
$a=m(a^*a+1-aa^{-})a=ma^*a^2$ and
$a=n((a^*)^2+1-aa^{-})a=n(a^*)^2a$, then
$ma^*=m(n(a^*)^2a)^*=(ma^*a^2)n^*=an^*$.

Let $x=ma^*=an^*$. Then $a=(na^*)a^*a=x^*a^*a$ and hence $x$ is a
\{1,3\}-inverse of $a$ by Lemma \ref{1314}. So, we have $axa=a$ and
$(ax)^*=ax$. Also, $xa^2=ma^*a^2=a$ and
$ax^2=ax(an^*)=(axa)n^*=an^*=x$. It follows from Lemma \ref{core 5}
that $a\in R^{(\#)}$ and $a^{(\#)}=ma^*=an^*$.

(i) $\Rightarrow$ (vi) by (i) $\Rightarrow$ (iv) and Proposition
\ref{group inverse}.

(vi) $\Rightarrow$ (i). Let $u=a+1-aa^{-}$ and $v=(a^*)^2+1-aa^{-}$.
As $u$ and $v$ are left invertible, then there exist $s,t\in R$ such
that $su=tv=1$. Hence, $a=tva=t(a^*)^2a\in Ra^*a$, which implies
that $a\in R^{(1,3)}$ according to Lemma \ref{1314}(i). Also,
$a=t(a^*)^2a=ta^*(t(a^*)^2a)^*a=(t(a^*)^2a)at^*a=a^2t^*a\in a^2R$,
which combines with $a=sua=sa^2\in Ra^2$ conclude $a\in a^2R \cap
Ra^2$, i.e., $a\in R^\#$. So, $a\in R^{(\#)}$ by Lemma
\ref{expression}.

We next give another formulae of $a^{(\#)}$.

Note that (iv) $\Leftrightarrow$ (v). In the proof of (v)
$\Rightarrow$ (i), taking $m=(a^*a+1-aa^{-})^{-1}$ and
$n=((a^*)^2+1-aa^{-})^{-1}$.

We obtain
\begin{eqnarray*}
a^{(\#)}&=&ma^*=(a^*a+1-aa^{-})^{-1}a^*\\
&=&an^*=a[((a^*)^2+1-aa^{-})^{-1}]^*.
\end{eqnarray*}

As $(a+1-aa^{-})a=a^2$, then $a=(a+1-aa^{-})^{-1}a^2$ and hence
$a^\#=(a+1-aa^{-})^{-2}a$ by Lemma \ref{Hartwig group inverse}.

From $(a^*+1-aa^{-})a=a^*a$, it follows that
$a=(a^*+1-aa^{-})^{-1}a^*a$. Using Lemma \ref{1314}(i), we know that
$((a^*+1-aa^{-})^{-1})^*$ is a \{1,3\}-inverse of $a$.

So,
\begin{eqnarray*}
a^{(\#)}&=&a^\#aa^{(1,3)}\\
&=&(a+1-aa^{-})^{-2}a^2((a^*+1-aa^{-})^{-1})^*\\
&=&(a+1-aa^{-})^{-1}a((a^*+1-aa^{-})^{-1})^*.
\end{eqnarray*}

The proof is completed.
\end{proof}

\begin{remark}
{\rm If $a\in R$ satisfies $a^*a=1$ and $aa^* \neq 1$, then
$a^*+1-aa^{-}$ is not left invertible for any $a^{-}\in a\{1\}$. In
fact, if $a^*+1-aa^{-}$ is left invertible for some $a^{-}\in
a\{1\}$, then there exists $s$ such that $s(a^*+1-aa^{-})=1$. As
$a^*a=1$, then $a=s(a^*+1-aa^{-})a=sa^*a=s$. Hence,
$a(a^*+1-aa^{-})=1$ and $a\in R^{-1}$. So, $aa^*=1$, which is a
contradiction.}
\end{remark}

\begin{proposition} \label{one} Let $k\geq 1$ be an integer and suppose that $a\in R$ is regular. If $(a^*)^k+1-aa^{-}\in R^{-1}$ for any $a^{-}\in a\{1\}$,
then $a\in R^{(\#)}$.
\end{proposition}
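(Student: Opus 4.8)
The plan is to prove separately the two conditions that characterize core invertibility in Lemma~\ref{expression}(i), namely $a\in R^{(1,3)}$ and $a\in R^\#$, and then to invoke that lemma. Both will be squeezed out of the single hypothesis, but applied to two different inner inverses: an arbitrary one for the $\{1,3\}$-part, and the $\{1,3\}$-inverse itself for the group part.

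First I would establish $a\in R^{(1,3)}$ using the hypothesis for an arbitrary inner inverse. Since $a$ is regular, fix some $a^-\in a\{1\}$ and set $w=(a^*)^k+1-aa^-$, which is invertible by assumption. Multiplying on the right by $a$ and using $aa^-a=a$ gives $wa=(a^*)^k a$, so $a=w^{-1}(a^*)^k a=\bigl(w^{-1}(a^*)^{k-1}\bigr)a^*a\in Ra^*a$ (valid for every $k\geq 1$). By Lemma~\ref{1314}(i) this produces a $\{1,3\}$-inverse of $a$, hence $a\in R^{(1,3)}$; in fact only left invertibility of $w$ is used here.

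The key step is then to produce the group inverse, and the idea is to apply Proposition~\ref{group inverse} not to $a$ but to $b=a^*$. Now that $a^{(1,3)}$ is available, I feed it into the hypothesis: since $a^{(1,3)}\in a\{1\}$, the element $(a^*)^k+1-aa^{(1,3)}$ is invertible. The crucial observation is that $p:=aa^{(1,3)}$ is self-adjoint, $(aa^{(1,3)})^*=aa^{(1,3)}$, so $p=(aa^{(1,3)})^*=(a^{(1,3)})^*a^*$. Writing $b=a^*$ and $b^-=(a^{(1,3)})^*$, one checks that $b^-$ is an inner inverse of $b$, because $a^*(a^{(1,3)})^*a^*=(aa^{(1,3)}a)^*=a^*$, and that $b^-b=(a^{(1,3)})^*a^*=p$. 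Hence the invertible element rewrites as $(a^*)^k+1-aa^{(1,3)}=b^k+1-b^-b$, which is precisely condition (iii) of Proposition~\ref{group inverse} for $b$. Therefore $b=a^*\in R^\#$, and passing to involutions gives $a\in R^\#$. Combining this with $a\in R^{(1,3)}$ and applying Lemma~\ref{expression}(i) concludes $a\in R^{(\#)}$.

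The main obstacle is exactly this group-invertibility step: the hypothesis only supplies a perturbation of the form $(a^*)^k+1-aa^-$, i.e.\ with the $a^k$ of Proposition~\ref{group inverse} replaced by $(a^*)^k$ and the idempotent $aa^-$ sitting on the ``wrong'' side, so it does not match Proposition~\ref{group inverse} for $a$ directly. The device that unlocks it is to switch to $b=a^*$ and to use precisely the $\{1,3\}$-inverse, whose associated idempotent $aa^{(1,3)}$ is self-adjoint and can therefore be reread as the required right idempotent $b^-b$ for $b=a^*$; this self-adjointness is also the reason the hypothesis is genuinely needed for all inner inverses and not merely for one.
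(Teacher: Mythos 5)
Your proof is correct and takes essentially the same route as the paper: the identical first step ($a=w^{-1}(a^*)^ka\in Ra^*a$ gives $a\in R^{(1,3)}$ via Lemma~\ref{1314}(i)), then the hypothesis applied at the inner inverse $a^{(1,3)}$ together with the self-adjointness of $aa^{(1,3)}$ is fed into Proposition~\ref{group inverse}, and Lemma~\ref{expression}(i) finishes. The only cosmetic difference is in the group-invertibility step: the paper stars the unit, noting $((a^*)^k+1-aa^{(1,3)})^*=a^k+1-aa^{(1,3)}$, and applies Proposition~\ref{group inverse}(ii) to $a$ directly, whereas you apply Proposition~\ref{group inverse}(iii) to $b=a^*$ with inner inverse $(a^{(1,3)})^*$ and then pass back through the involution --- two equivalent ways of exploiting the same self-adjointness.
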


\begin{proof} Let $u=(a^*)^k+1-aa^{-}$. As $u$ is invertible, then $a=u^{-1}(a^*)^ka \in Ra^*a$, hence $a$ is $\{1,3\}$-invertible by Lemma \ref{1314}(i).

As $((a^*)^k+1-aa^{(1,3)})^*=a^k+1-aa^{(1,3)}$ is invertible for
$a^{(1,3)}\in a\{1\}$, then $a\in R^\#$ by Proposition \ref{group
inverse}. So, $a\in R^{(\#)}$ from Lemma \ref{expression}.
\end{proof}

\begin{remark} \label{rmk change}{\rm If $a^*+1-aa^{-}\in R^{-1}$ for some $a^{-}\in
a\{1\}$, then $a\notin R^{(\#)}$ in general. Such as let
$R=M_2(\mathbb{C})$ be the ring of all $2 \times 2$ complex matrices
and suppose that involution $*$ is the conjugate transpose. Let
$A=\left[\begin{smallmatrix}
              0   & 1 \\
              0 & 0
        \end{smallmatrix}
 \right] \normalsize
\in R$. Then $A^{-}=\left[\begin{smallmatrix}
              0   & 0 \\
              1 & 1
        \end{smallmatrix}
 \right] \normalsize \in A\{1\}$. Hence, $A^*+I-AA^{-}=
 \left[\begin{smallmatrix}
              0 & -1 \\
              1 & 1
        \end{smallmatrix}
 \right]  \normalsize  \in R^{-1}$, but $A\notin R^\#$. So, $A\notin R^{(\#)}$.}
\end{remark}

The converse of Proposition \ref{one} may not be true. In the
following Example \ref{ex}, we find $a$ core invertible, but there
exists some $a^{-} \in a\{1\}$ such that none of $a^*+1-aa^{-}$,
$(a^*)^2+1-aa^{-}$ and $a^*a+1-aa^{-}$ are invertible.

\begin{example} \label{ex} {\rm Let $R$ be the ring as Remark \ref{rmk change}.
Given $A=\left[\begin{smallmatrix}
              1 & -2 \\
              1 & -2 \\
        \end{smallmatrix}
 \right] \normalsize \in R$, then $A^2=-A$ and hence $A^\#$ exists. So,
$A^{(\#)}$ exists. Taking $A^{-}=\left[\begin{smallmatrix}
            \frac{2}{3} & \frac{1}{3} \\
            0 & 0 \\
        \end{smallmatrix}
 \right]$, then
$A^*+I-AA^{-}=\frac{1}{3} \left[\begin{smallmatrix}
             4 & 2 \\
             -8 & -4 \\
        \end{smallmatrix}
 \right]$,
 $(A^*)^2+I-AA^{-}= \frac{1}{3}
 \left[\begin{smallmatrix}
              -2 & -4 \\
  4 & 8 \\
        \end{smallmatrix}
 \right]$ and
  $A^*A+I-AA^{-}=  \frac{1}{3}
  \left[\begin{smallmatrix}
               7 & -13 \\
   -14 & 26 \\
        \end{smallmatrix}
 \right]$ are not invertible.}
\end{example}

\begin{theorem} Let  $k\geq 1$ be an integer and suppose $a\in R^{(\#)}$. Then the
following conditions are equivalent for any $a^{-}\in a\{1\}${\rm :}

\emph{(i)} $(a^*)^k+1-aa^{-}\in R^{-1}$.

\emph{(ii)} $(a^*)^{k+1}+1-aa^{-}\in R^{-1}$.

\emph{(iii)} $a^*a+1-aa^{-}\in R^{-1}$.

In this case,
$a^{(\#)}=(a^*a+1-aa^{-})^{-1}a^*=a^k[((a^*)^{k+1}+1-aa^{-})^{-1}]^*$.
\end{theorem}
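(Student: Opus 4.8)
The plan is to fix an arbitrary inner inverse $a^{-}$, write $e=aa^{-}$, and reduce everything to one statement: for $a\in R^{(\#)}$ and \emph{every} integer $m\ge 1$, the element $(a^{*})^{m}+1-e$ is invertible if and only if $a^{*}a+1-e$ is. Specialising to $m=k$ and $m=k+1$ then yields (i)$\Leftrightarrow$(iii) and (ii)$\Leftrightarrow$(iii) simultaneously. Since $a\in R^{(\#)}$, Lemma \ref{expression} gives $a\in R^{\#}\cap R^{(1,3)}$; I fix a $\{1,3\}$-inverse $a^{(1,3)}$ and set $f=aa^{(1,3)}$, a Hermitian idempotent satisfying $fa=a$ and $a^{*}f=a^{*}$. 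Applying Proposition \ref{group inverse} and Theorem \ref{core inverse} to the inner inverse $a^{(1,3)}$ (whose associated idempotent is exactly $f$), I record at the outset that $(a^{*})^{m}+1-f$ and $a^{*}a+1-f$ are all invertible. The bridge between $e$ and $f$ is $n:=e-f$, for which I would check $n^{2}=0$, $fn=en=n$ and $nf=ne=0$; equivalently $n$ lies in the off-diagonal corner $fR(1-f)$ of the Peirce decomposition determined by $f$.

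Working in that decomposition, I first handle the powers. From $a^{*}f=a^{*}$ one gets $(a^{*})^{m}f=(a^{*})^{m}$, so $(a^{*})^{m}+1-f$ is block lower triangular with diagonal blocks $\mu^{m}$ and $1-f$, where $\mu=fa^{*}f$ is invertible in $fRf$ (the $m=1$ case of the recorded invertibility). Writing $(a^{*})^{m}+1-e=\bigl((a^{*})^{m}+1-f\bigr)-n$ and factoring out the invertible first term, invertibility of $(a^{*})^{m}+1-e$ is equivalent to that of $1-\bigl((a^{*})^{m}+1-f\bigr)^{-1}n$. Inverting the triangular matrix, its lower-left block equals $-\beta_{1}\mu^{-1}$ \emph{independently of} $m$, since the powers cancel ($\beta_{m}\mu^{-m}=\beta_{1}\mu^{-1}$ with $\beta_{1}=(1-f)a^{*}$). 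Hence $(a^{*})^{m}+1-e$ is invertible precisely when the single corner element $(1-f)+\beta_{1}\mu^{-1}n$ is invertible in $(1-f)R(1-f)$, a condition not depending on $m$; this already gives (i)$\Leftrightarrow$(ii).

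The remaining and most delicate step is to show that (iii) reduces to the \emph{same} corner element. Here the block matrix of $a^{*}a+1-f$ is Hermitian and genuinely full, not triangular, so I cannot read the answer off a triangular inverse. Instead I would compute the lower-left block of $(a^{*}a+1-f)^{-1}$ via the Schur complement relative to the invertible $(1,1)$-block $D=\mu\mu^{*}$, with off-diagonal block $U=\mu Q$ (where $Q=fa(1-f)$) and $(2,2)$-block $E=(1-f)+Q^{*}Q$. One finds $U^{*}D^{-1}=\beta_{1}\mu^{-1}$, and the crucial point is that the Schur complement collapses exactly to $1-f$, because $U^{*}D^{-1}U=Q^{*}Q$ cancels the $Q^{*}Q$ sitting inside $E$. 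Consequently the lower-left block is again $-\beta_{1}\mu^{-1}$, and invertibility of $a^{*}a+1-e$ is equivalent to invertibility of the very same $(1-f)+\beta_{1}\mu^{-1}n$, giving (iii)$\Leftrightarrow$(i)$\Leftrightarrow$(ii). This cancellation is the crux, and the step I expect to demand the most care.

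For the representation I would assume the equivalent conditions hold, put $s=(a^{*}a+1-e)^{-1}$ and $t=((a^{*})^{k+1}+1-e)^{-1}$, and use $(a^{*}a+1-e)a=a^{*}a^{2}$ and $((a^{*})^{k+1}+1-e)a=(a^{*})^{k+1}a$, both immediate from $ea=a$. These give $sa^{*}a^{2}=a$ and $a=t(a^{*})^{k+1}a$; taking the adjoint of the second and substituting into the first yields $sa^{*}=a^{k}t^{*}$. Setting $x=sa^{*}=a^{k}t^{*}$, the identity $x^{*}a^{*}a=t(a^{*})^{k+1}a=a$ shows (Lemma \ref{1314}) that $x$ is a $\{1,3\}$-inverse, whence $ax=f$ and $(ax)^{*}=ax$; combined with $xa^{2}=sa^{*}a^{2}=a$ and $fa^{k}=a^{k}$ (so that $ax^{2}=fx=x$), Lemma \ref{core 5} identifies $x$ as $a^{(\#)}$. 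This is the asserted formula $a^{(\#)}=(a^{*}a+1-aa^{-})^{-1}a^{*}=a^{k}[((a^{*})^{k+1}+1-aa^{-})^{-1}]^{*}$.
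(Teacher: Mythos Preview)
Your argument is correct, but the route you take for the equivalences is quite different from the paper's and considerably heavier. The paper never introduces the Peirce decomposition or any block--matrix computation. Instead it finds two fixed invertible factors, depending only on $a$ and not on $a^{-}$, that translate one expression into the next. Concretely, writing $p=aa^{(\#)}$ (which equals $aa^{(1,3)}$ and is Hermitian), Proposition~\ref{group inverse} gives $a+1-p\in R^{-1}$, hence $a^{*}+1-p\in R^{-1}$, and Jacobson's Lemma then yields $pa^{*}+1-p\in R^{-1}$. The two identities
\[
(a^{*}+1-aa^{-})(a+1-p)=a^{*}a+1-aa^{-},\qquad
\bigl((a^{*})^{n}+1-aa^{-}\bigr)(pa^{*}+1-p)=(a^{*})^{n+1}+1-aa^{-}
\]
(which use only $pa=a$ and $a^{*}p=a^{*}$) now give $(\mathrm{iii})\Leftrightarrow[(\mathrm{i})$ with $k=1]$ and the inductive step $(a^{*})^{n}+1-aa^{-}\in R^{-1}\Leftrightarrow(a^{*})^{n+1}+1-aa^{-}\in R^{-1}$, from which all three conditions are equivalent in two lines. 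Your Peirce/Schur computation recovers exactly this: the factorisation $a^{*}a+1-f=(a^{*}+1-f)(a+1-f)$ is what makes your Schur complement collapse to $1-f$ and forces the $(2,1)$ block of $(a^{*}a+1-f)^{-1}$ to coincide with that of $(a^{*}+1-f)^{-1}$. So your approach is a structural explanation of \emph{why} the paper's factorisations exist, at the cost of substantially more bookkeeping; the paper's approach is shorter and avoids any appeal to invertibility of triangular corner matrices in a general ring. For the representation your derivation is essentially the paper's (the paper sets $n_{1}=t(a^{*})^{k-1}$ and cites the proof of Theorem~\ref{core inverse}\,(v)$\Rightarrow$(i); you keep $t$ itself and verify Lemma~\ref{core 5} directly), so no difference of substance there.
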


\begin{proof} As $a\in R^{(\#)}$, then $a\in
R^\#$ by Lemma \ref{expression}. Hence, $a+1-aa^{(\#)}\in R^{-1}$
from Proposition \ref{group inverse}. Note that
$aa^{(\#)}=aa^{(1,3)}$ and $a^*aa^{(\#)}=a^*$. Hence,
$1+(a^*-1)aa^{(\#)}=a^*+1-aa^{(\#)}=(a+1-aa^{(\#)})^*\in R^{-1}$.
From Jacobson's Lemma, it follows that
$aa^{(\#)}a^*+1-aa^{(\#)}=1+aa^{(\#)}(a^*-1)\in R^{-1}$.

As $(a^*+1-aa^{-})(a+1-aa^{(\#)})=a^*a+1-aa^{-}$ and
$a+1-aa^{(\#)}\in R^{-1}$, then $a^*+1-aa^{-}\in R^{-1}$ if and only
if $a^*a+1-aa^{-}\in R^{-1}$.

Also,
$((a^*)^n+1-aa^{-})(aa^{(\#)}a^*+1-aa^{(\#)})=(a^*)^naa^{(\#)}a^*+1-aa^{-}=(a^*)^{n+1}+1-aa^{-}$,
then $(a^*)^n+1-aa^{-}\in R^{-1}$ if and only if
$(a^*)^{n+1}+1-aa^{-}\in R^{-1}$ since $aa^{(\#)}a^*+1-aa^{(\#)}\in
R^{-1}$.

Thus, $a^*a+1-aa^{-}\in R^{-1}$ if and only if $(a^*)^k+1-aa^{-}\in
R^{-1}$ if and only if $(a^*)^{k+1}+1-aa^{-}\in R^{-1}$.

Set $m_1=(a^*a+1-aa^{-})^{-1}$ and $n_1=((a^*)^{k+1}+1-aa^{-})^{-1}
(a^*)^{k-1}$. Then
\begin{eqnarray*}
a&=&(a^*a+1-aa^{-})^{-1}(a^*a+1-aa^{-})a\\
 &=&m_1a^*a^2
\end{eqnarray*}
and
\begin{eqnarray*}
a &=& ((a^*)^{k+1}+1-aa^{-})^{-1}((a^*)^{k+1}+1-aa^{-})a\\
&=& ((a^*)^{k+1}+1-aa^{-})^{-1} (a^*)^{k-1}(a^*)^2a\\
&=&n_1(a^*)^2a.
\end{eqnarray*}

From Theorem \ref{core inverse} (v) $\Rightarrow$ (i), it follows
that
$a^{(\#)}=m_1a^*=an_1^*=(a^*a+1-aa^{-})^{-1}a^*=a^k[((a^*)^{k+1}+1-aa^{-})^{-1}]^*$.
\end{proof}

\begin{remark}{\rm Even though $a^*a+1-aa^{-}\in R^{-1}$ for any $a^{-}\in a\{1\}$, it does not imply the core invertibility of $a$. Let $R$ be the infinite
 matrix ring as in Remark \ref{ex1} and let $a=\Sigma_{i=1}^\infty e_{i+1,i}$. Then $a^*a=1$, $aa^* \neq 1$. For any  $a^{-}\in a\{1\}$,
 as $(2-aa^{-})^{-1}=\frac{1}{2}(1+aa^{-})$, then
 $2-aa^{-}=a^*a+1-aa^{-}\in R^{-1}$ . But $a \notin R^\#$ and hence
 $a\notin R^{(\#)}$.}
\end{remark}

\begin{proposition} \label{k core} Let $k\geq 1$ be an integer and suppose $a\in R$. Then the
following conditions are equivalent{\rm :}

\emph{(i)} $a\in R^{(\#)}$.

\emph{(ii)} $a\in R^{(1,3)}$ and $(a^*)^k+1-aa^{(1,3)}\in R^{-1}$
for any $a^{(1,3)}\in a\{1,3\}$.

\emph{(iii)} $a\in R^{(1,3)}$ and $(a^*)^k+1-aa^{(1,3)}\in R^{-1}$
for some $a^{(1,3)}\in a\{1,3\}$.

In this case,
$a^{(\#)}=(u^{-1})^*a^{2k-1}(u^{-1})^*=(u^{-1})^*a^{k-1}u^{-1}(a^k)^*$,
where $u=(a^*)^k+1-aa^{(1,3)}$.
\end{proposition}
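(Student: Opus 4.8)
The plan is to push everything through Proposition~\ref{group inverse}, the key observation being that a $\{1,3\}$-inverse is in particular an inner inverse and that the expression in (i)--(iii) is, up to the involution, exactly the unit appearing there. First I would record the facts used throughout. Put $e=aa^{(1,3)}$; then $axa=a$ together with $(ax)^*=ax$ shows $e$ is a Hermitian idempotent, so $ea=a$ and $ea^k=a^k$, while Lemma~\ref{1314}(i) gives $a^*=a^*aa^{(1,3)}=a^*e$, hence $(a^*)^ke=(a^*)^k$. Taking the involution of $u=(a^*)^k+1-aa^{(1,3)}$ and using that $e$ is Hermitian yields $u^*=a^k+1-aa^{(1,3)}$, which is precisely the unit $a^k+1-aa^{-}$ of Proposition~\ref{group inverse} for the inner inverse $a^{-}=a^{(1,3)}$. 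Since an element is invertible iff its adjoint is, $u\in R^{-1}$ iff $a^k+1-aa^{(1,3)}\in R^{-1}$.

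For the equivalences I would argue as follows. For (i)~$\Rightarrow$~(ii), Lemma~\ref{expression} gives $a\in R^\#\cap R^{(1,3)}$; then for every $a^{(1,3)}\in a\{1,3\}$ Proposition~\ref{group inverse} makes $a^k+1-aa^{(1,3)}=u^*$ invertible, so $u\in R^{-1}$. The implication (ii)~$\Rightarrow$~(iii) is immediate. For (iii)~$\Rightarrow$~(i), invertibility of $u$ forces invertibility of $u^*=a^k+1-aa^{(1,3)}$, so Proposition~\ref{group inverse} gives $a\in R^\#$; combined with the standing hypothesis $a\in R^{(1,3)}$, Lemma~\ref{expression} yields $a\in R^{(\#)}$.

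It remains to verify the two representations. I would start from $a^{(\#)}=a^\#aa^{(1,3)}=a^\#e$ (Lemma~\ref{expression}) and write $U=u^*=a^k+1-e$, $p=aa^\#=a^\#a$. Since $e$ is idempotent, $a^{(\#)}e=a^\#e\cdot e=a^\#e=a^{(\#)}$, so $a^{(\#)}U=a^{(\#)}a^k=a^\#ea^k=a^\#a^k$, using $ea^k=a^k$. On the other side, $a^\#a^k=pa^{k-1}$, and the identities $ap=a$ and $ep=p$ give $U(a^\#a^k)=a^kpa^{k-1}+pa^{k-1}-epa^{k-1}=a^{2k-1}$; hence $a^\#a^k=U^{-1}a^{2k-1}$, and combining the two computations produces $a^{(\#)}=a^\#a^kU^{-1}=U^{-1}a^{2k-1}U^{-1}=(u^{-1})^*a^{2k-1}(u^{-1})^*$. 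For the second form I would establish the commutation $ua^k=(a^*)^ka^k=(a^*)^kU$, which follows at once from $ea^k=a^k$ and $(a^*)^ke=(a^*)^k$; this rearranges to $a^kU^{-1}=u^{-1}(a^*)^k=(U^{-1})^*(a^*)^k$, and substituting into $a^{(\#)}=U^{-1}a^{k-1}(a^kU^{-1})$ gives $a^{(\#)}=(u^{-1})^*a^{k-1}u^{-1}(a^k)^*$.

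The conceptual content is entirely the reduction $u^*=a^k+1-aa^{(1,3)}$ in the first paragraph; the only real hazard is arithmetic, namely keeping the exponents consistent so that the final powers come out as $a^{2k-1}$, $a^{k-1}$ and $(a^k)^*$ rather than off by one. The safe route, which I would follow, is to phrase every reduction through the projections $p$ and $e$ using only $ap=a$, $ep=p$, $ea^k=a^k$ and $(a^*)^ke=(a^*)^k$, so that no single factor $a$ is ever cancelled and no separate treatment of $k=1$ is needed.
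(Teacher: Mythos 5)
Your proof is correct, and the equivalence part coincides with the paper's: both rest on the observation that $u^*=a^k+1-aa^{(1,3)}$ (because $aa^{(1,3)}$ is Hermitian), so that Proposition~\ref{group inverse} with $a^-=a^{(1,3)}$ settles (i)$\Leftrightarrow$(iii) after passing through the involution. Where you genuinely diverge is in deriving the representation. The paper constructs explicit ingredients and then multiplies them out: from $u^*a=a^{k+1}$ and Lemma~\ref{Hartwig group inverse} it gets $a^\#=\bigl((u^{-1})^*a^{k-1}\bigr)^2a$, from $ua=(a^*)^ka$ and Lemma~\ref{1314}(i) it exhibits $a^{k-1}(u^{-1})^*$ as a concrete $\{1,3\}$-inverse, and then it simplifies $a^{(\#)}=a^\#aa^{(1,3)}$ step by step down to $(u^{-1})^*a^{2k-1}(u^{-1})^*$. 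You instead sandwich the core inverse between two copies of $U=u^*$: using only the idempotent identities $ea=a$, $ap=a$, $ep=p$ you show $a^{(\#)}U=a^\#a^k$ and $U(a^\#a^k)=a^{2k-1}$, hence $Ua^{(\#)}U=a^{2k-1}$ and the formula drops out. Your route bypasses Lemma~\ref{Hartwig group inverse} entirely (you only need existence of $a^\#$, not its formula) and never has to verify that a particular element is a $\{1,3\}$-inverse; it is also visibly uniform in $k$. The paper's route, at the cost of more bookkeeping, yields as by-products the explicit expressions $a^\#=\bigl((u^{-1})^*a^{k-1}\bigr)^2a$ and $a^{(1,3)}=a^{k-1}(u^{-1})^*$ in terms of the unit $u$, which have some independent interest (and a variant of the latter is reused in the remark following the proposition). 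The final step in both proofs is the same: the commutation $ua^k=(a^*)^ku^*$ converts the first formula into $(u^{-1})^*a^{k-1}u^{-1}(a^k)^*$.
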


\begin{proof} (i) $\Rightarrow$ (ii). It follows from Lemma
\ref{expression} that $a\in R^{(\#)}$ implies $a\in R^\# \cap
R^{(1,3)}$. Hence, $a^k+1-aa^{(1,3)}\in R^{-1}$ from Proposition
\ref{group inverse}.

So, $(a^*)^k+1-aa^{(1,3)}=(a^k+1-aa^{(1,3)})^*\in R^{-1}$ for any
$a^{(1,3)}\in a\{1,3\}$.

(ii) $\Rightarrow$ (iii) is clear.

(iii) $\Rightarrow$ (i). Let $u=(a^*)^k+1-aa^{(1,3)}$. Then
$a^k+1-aa^{(1,3)}=u^*\in R^{-1}$ and hence $a\in R^\#$ by
Proposition \ref{group inverse}.

As $u^*a=a^{k+1}$, then $a=(u^{-1})^*a^{k+1}=(u^{-1})^*a^{k-1}a^2$.
Lemma \ref{Hartwig group inverse} guarantees that
$a^\#=((u^{-1})^*a^{k-1})^2a$.

Also, $ua=(a^*)^ka$ implies $a=(u^{-1}(a^*)^{k-1})a^*a$. So,
applying Lemma \ref{1314}(i), we know that $a\in R^{(1,3)}$ and
$(u^{-1}(a^*)^{k-1})^*= a^{k-1} (u^{-1})^*$ is a \{1,3\}-inverse of
$a$.

Hence, we have
\begin{eqnarray*}
a^{(\#)}&=&a^\#aa^{(1,3)}\\
&=&((u^{-1})^*a^{k-1})^2a^2 a^{k-1} (u^{-1})^*\\
&=&(u^{-1})^*a^{k-1}(u^{-1})^*a^{k+1}a^{k-1} (u^{-1})^*\\
&=&(u^{-1})^*a^{k-1}aa^{k-1} (u^{-1})^*\\
&=&(u^{-1})^*a^{2k-1} (u^{-1})^*.
\end{eqnarray*}

From $ua^k=(a^*)^ku^*$, it follows that
$u^{-1}(a^*)^k=a^k(u^{-1})^*$.

Thus, $a^{(\#)}=(u^{-1})^*a^{k-1}
a^k(u^{-1})^*=(u^{-1})^*a^{k-1}u^{-1}(a^*)^k$.
\end{proof}

\begin{remark} {\rm In Proposition \ref{k core}, if $k\geq 2$, then the
expression of the core inverse of $a$ can be given as
$a^{(\#)}=a^{k-1}(u^{-1})^*$, where $u=(a^*)^k+1-aa^{(1,3)}$.
Indeed, as $u^*a^{k-1}=a^{2k-1}$, then $(u^*)^{-1}a^{2k-1}=a^{k-1}$.
Hence, $a^{(\#)}=(u^{-1})^*a^{2k-1}(u^{-1})^*=a^{k-1}(u^{-1})^*$.}
\end{remark}

Taking $k=1$ in Proposition \ref{k core}, it follows that

\begin{corollary} \label{1 core} Let $a\in R$. Then the
following conditions are equivalent{\rm :}

\emph{(i)} $a\in R^{(\#)}$.

\emph{(ii)} $a\in R^{(1,3)}$ and $a^*+1-aa^{(1,3)}\in R^{-1}$ for
any $a^{(1,3)}\in a\{1,3\}$.

\emph{(iii)} $a\in R^{(1,3)}$ and $a^*+1-aa^{(1,3)}\in R^{-1}$ for
some $a^{(1,3)}\in a\{1,3\}$.

In this case, $a^{(\#)}=(u^{-1})^*a(u^{-1})^*=(u^{-1})^*u^{-1}a^*$,
where $u=a^*+1-aa^{(1,3)}$.
\end{corollary}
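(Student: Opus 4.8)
The plan is to obtain Corollary \ref{1 core} as the special case $k=1$ of Proposition \ref{k core}, which has already been established for every integer $k\geq 1$, so that nothing new needs to be proved from scratch.

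First I would instantiate the equivalence of Proposition \ref{k core} at $k=1$. Its condition (i), $a\in R^{(\#)}$, is verbatim condition (i) here. In its condition (ii) the factor $(a^*)^k$ becomes $(a^*)^1=a^*$, so the clause ``$a\in R^{(1,3)}$ and $(a^*)^k+1-aa^{(1,3)}\in R^{-1}$ for any $a^{(1,3)}\in a\{1,3\}$'' reads exactly ``$a\in R^{(1,3)}$ and $a^*+1-aa^{(1,3)}\in R^{-1}$ for any $a^{(1,3)}\in a\{1,3\}$'', which is condition (ii) here; the same substitution turns its condition (iii) into condition (iii) here. Hence the three-fold equivalence is immediate.

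Next, for the representation formula, I would substitute $k=1$ into the formula of Proposition \ref{k core}, namely $a^{(\#)}=(u^{-1})^*a^{2k-1}(u^{-1})^*=(u^{-1})^*a^{k-1}u^{-1}(a^k)^*$ with $u=(a^*)^k+1-aa^{(1,3)}$. At $k=1$ one has $a^{2k-1}=a$, $a^{k-1}=a^0=1$, $(a^k)^*=a^*$ and $u=a^*+1-aa^{(1,3)}$, so the two expressions collapse to $a^{(\#)}=(u^{-1})^*a(u^{-1})^*$ and $a^{(\#)}=(u^{-1})^*u^{-1}a^*$, which is precisely the asserted formula with $u=a^*+1-aa^{(1,3)}$.

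There is no genuine obstacle, since the argument is a pure specialization: the only thing to verify is that the exponents $2k-1$, $k-1$ and $k$ reduce to $1$, $0$ and $1$ at $k=1$, which they plainly do. If one preferred a self-contained proof, one could simply re-run the short computation in the proof of Proposition \ref{k core} with $k=1$ written out (using $u^*a=a^2$ to get $a=(u^{-1})^*a^2$ and hence $a^\#=((u^{-1})^*)^2a$ via Lemma \ref{Hartwig group inverse}, together with $ua=a^*a$ giving a $\{1,3\}$-inverse $(u^{-1})^*$ of $a$ via Lemma \ref{1314}(i), and then assembling $a^{(\#)}=a^\#aa^{(1,3)}$ by Lemma \ref{expression}); but invoking the already-proved general result is cleaner.
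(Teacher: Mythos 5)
Your proposal is correct and matches the paper exactly: the paper itself derives Corollary \ref{1 core} by the single remark ``Taking $k=1$ in Proposition \ref{k core}, it follows that,'' which is precisely your specialization argument. Your verification that the exponents $2k-1$, $k-1$, $k$ and the unit $u=(a^*)^k+1-aa^{(1,3)}$ reduce to the stated formula at $k=1$ is accurate, so nothing further is needed.
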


\begin{remark} {\rm Let $a\in R$ be regular with an inner inverse $a^{-}$. If $u=a^*+1-aa^{-}\in R^{-1}$,
then $(u^{-1})^*a(u^{-1})^*=(u^{-1})^*u^{-1}a^*$. In fact, as
$ua=a^*a$, then $a=u^{-1}a^*a$ and hence $(u^{-1})^*\in a\{1,3\}$ by
Lemma \ref{1314}. Thus, $a(u^{-1})^*=(a(u^{-1})^*)^*=u^{-1}a^*$ and
$(u^{-1})^*a(u^{-1})^*=(u^{-1})^*u^{-1}a^*$. Moreover, if $a\in
R^{(\#)}$, then $a^{(\#)} \neq (u^{-1})^*u^{-1}a^*$ in general.
Indeed, take $A=\left[\begin{smallmatrix}
1 & -2 \\
 1 & -2 \\
 \end{smallmatrix}
 \right]$ in Remark \ref{rmk change}. Then $A^\#=A$ and $A^\dag= \frac{1}{10}
 \left[\begin{smallmatrix}
 1 & 1 \\
-2 & -2 \\
\end{smallmatrix}
 \right]$. Hence,
$A^{(\#)}=A^\#AA^\dag= -\frac{1}{2} \left[\begin{smallmatrix}
1 & 1 \\
1 & 1 \\
\end{smallmatrix}
\right]$. Taking $A^{-}=\left[\begin{smallmatrix}
0 & 1 \\
0 & 0 \\
\end{smallmatrix}
 \right]  \normalsize \in A\{1\}$, then $U=A^*+I-AA^{-}=\left[\begin{smallmatrix}
 2 & 0 \\
-2 & -2 \\
\end{smallmatrix}
 \right]$  \normalsize is invertible. But
$(U^{-1})^*U^{-1}A^*=(U^{-1})^*A(U^{-1})^*=\frac{1}{4}
\left[\begin{smallmatrix}
 0 & 0 \\
-1 & -1 \\
\end{smallmatrix}
 \right] \neq A^{(\#)}$.}
\end{remark}

\begin{proposition} Let $a\in R^{(\#)}$ and
suppose $u=a^*+1-aa^{-}\in R^{-1}$ for some $a^{-}\in a\{1\}$. Then
$a^{(\#)}=(u^{-1})^*a(u^{-1})^*$ if and only if $a^{-}\in a\{1,3\}$.
\end{proposition}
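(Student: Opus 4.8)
The plan is to reduce the asserted representation to the single $\{1,3\}$-relation $a^*=a^*aa^-$ of Lemma~\ref{1314}(i) by cancelling one unit. I would begin from the Remark immediately preceding this proposition, which shows that $u=a^*+1-aa^-\in R^{-1}$ forces $ua=a^*a$, whence $a=u^{-1}a^*a$ and $(u^{-1})^*\in a\{1,3\}$; crucially, this conclusion requires \emph{no} assumption that $a^-$ itself be a $\{1,3\}$-inverse. Since $a\in R^{(\#)}$ gives $a\in R^\#\cap R^{(1,3)}$ by Lemma~\ref{expression}(i), and the formula of Lemma~\ref{expression}(i) is valid for any $\{1,3\}$-inverse (the value of $a^\#a\,a^{(1,3)}$ being independent of the choice), I may take $a^{(1,3)}=(u^{-1})^*$ to write $a^{(\#)}=a^\#a(u^{-1})^*$. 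The candidate expression is $(u^{-1})^*a(u^{-1})^*$, and both expressions share the right factor $(u^{-1})^*$.

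The decisive observation is that $(u^{-1})^*=(u^*)^{-1}$ is a unit and may therefore be cancelled on the right, so that
\[
a^{(\#)}=(u^{-1})^*a(u^{-1})^*\iff a^\#a=(u^{-1})^*a.
\]
Multiplying the right-hand equation on the right by $a$ and using $a^\#a^2=a$ gives $a=(u^{-1})^*a^2$; left-multiplying by $u^*$ then yields $u^*a=a^2$, and substituting $u^*=a+1-(aa^-)^*$ makes the $a^2$ terms cancel, leaving $(aa^-)^*a=a$.

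It remains to take adjoints: $(aa^-)^*a=a$ is equivalent to $a^*(aa^-)=a^*$, i.e.\ $a^*aa^-=a^*$, which by Lemma~\ref{1314}(i) is precisely the statement that $a^-\in a\{1,3\}$. Every link in this chain is reversible --- the two cancellations are by units, the substitution into $u^*a=a^2$ is an identity, and the right-multiplication by $a$ is undone by right-multiplication by $a^\#$ using $a^2a^\#=a$ and $aa^\#=a^\#a$ --- so reading the equivalences backwards establishes the converse.

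The one genuinely delicate point is the reduction in the first paragraph: one must notice that $(u^{-1})^*$ is a $\{1,3\}$-inverse \emph{unconditionally}, so that the core inverse always admits the representation $a^{(\#)}=a^\#a(u^{-1})^*$ and the entire discrepancy between $a^{(\#)}$ and the candidate is concentrated in the comparison of $a^\#a$ with $(u^{-1})^*a$. Once this is isolated, cancelling the unit $(u^{-1})^*$ and identifying the surviving condition with $a^*=a^*aa^-$ through an adjoint are routine, relying only on the standard group-inverse identities $aa^\#a=a$, $a^\#a^2=a^2a^\#=a$ and $aa^\#=a^\#a$.
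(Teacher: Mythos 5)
Your proof is correct, and its architecture differs from the paper's in a worthwhile way. The paper argues the two directions separately: for the forward implication it applies the assumed formula to $a^2$, using the core-inverse identity $a^{(\#)}a^2=a$ (Lemma~\ref{core 5}) together with $a(u^{-1})^*a=a$ to reach $a=(u^*)^{-1}a^2$, and then---exactly as in your chain---expands $u^*a=a^2$ into $(aa^{-})^*a=a$, i.e.\ $a^*aa^{-}=a^*$, concluding by Lemma~\ref{1314}(i); for the converse it simply cites Corollary~\ref{1 core}. You instead prove unconditionally that $a^{(\#)}=a^\#a(u^{-1})^*$ and cancel the unit $(u^{-1})^*$ on the right, so that the whole proposition collapses to the single equivalence $a^\#a=(u^{-1})^*a$, after which your reversible chain yields both directions at once; this makes the proof self-contained, with no appeal to Corollary~\ref{1 core} or Proposition~\ref{k core}. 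The only step you assert rather than prove is that $a^\#aa^{(1,3)}$ does not depend on the choice of $\{1,3\}$-inverse; this is true but deserves a line: if $x,y\in a\{1,3\}$, then $ax\,ay=(axa)y=ay$, and taking adjoints gives $ay\,ax=ay$, while $ay\,ax=(aya)x=ax$, so $ax=ay$, i.e.\ already $aa^{(1,3)}$ is choice-independent. (Alternatively this follows from the uniqueness of the core inverse once Lemma~\ref{expression}(i) is read as holding for an arbitrary $\{1,3\}$-inverse, which is how the paper itself uses that formula, e.g.\ in the proof of Theorem~\ref{core inverse}.) With that line supplied, every step of your argument checks out.
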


\begin{proof} ``$\Rightarrow$'' As $ua=a^*a$, then $a=u^{-1}a^*a$.
It follows from Lemma \ref{1314} that $(u^{-1})^*\in a\{1,3\}$ and
$a=a(u^{-1})^*a$. As also
$a=a^{(\#)}a^2=(u^{-1})^*a(u^{-1})^*a^2=(u^{-1})^*a^2=(u^*)^{-1}a^2$,
then $a^2=u^*a=(a+1-(aa^{-})^*)a=a^2+a-(aa^{-})^*a$, which implies
$a=(aa^{-})^*a=(a^{-})^*a^*a$. Again, Lemma \ref{1314} guarantees
that $a^{-}\in a\{1,3\}$.

``$\Leftarrow$'' See Corollary \ref{1 core}.
\end{proof}

Recall that a ring $R$ is called Dedekind-finite if $ab=1$ implies
$ba=1$, for all $a,b\in R$. We next give characterizations of core
inverse in such a ring.

\begin{corollary} \label{add new} Let $R$ be a Dedekind-finite ring. Then the following conditions are equivalent{\rm :}

\emph{(i)} $a\in R^{(\#)}$.

\emph{(ii)} $a\in R^{(1,3)}$ and $a^*a+1-aa^{(1,3)}$ is invertible
for any $a^{(1,3)}$.

\emph{(iii)} $a\in R^{(1,3)}$ and $a^*a+1-aa^{(1,3)}$ is invertible
for some $a^{(1,3)}$.

In this case, $a^{(\#)}=v^{-1}a^*$, where $v=a^*a+1-aa^{(1,3)}$.
\end{corollary}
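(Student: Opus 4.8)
The plan is to reduce the whole statement to Proposition \ref{group inverse} and Lemma \ref{expression} by means of the multiplicative factorization $a^*a+1-aa^{(1,3)}=(a^*+1-aa^{(1,3)})(a+1-aa^{(1,3)})$. This identity holds exactly because $a^{(1,3)}\in a\{1,3\}$: by Lemma \ref{1314}(i) we have $a^*aa^{(1,3)}=a^*$, so $a^*(1-aa^{(1,3)})=0$, and since also $aa^{(1,3)}a=a$ the cross terms on the right collapse to leave $a^*a+1-aa^{(1,3)}$. I would record this computation once at the start, as it drives both nontrivial implications.

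For (i) $\Rightarrow$ (ii) I would argue as follows. By Lemma \ref{expression}, $a\in R^{(\#)}$ gives $a\in R^\#\cap R^{(1,3)}$. Fix any $a^{(1,3)}\in a\{1,3\}$. Since $a\in R^\#$, Proposition \ref{group inverse} (with $k=1$ and inner inverse $a^{(1,3)}$) yields $a+1-aa^{(1,3)}\in R^{-1}$; applying the involution and using $(aa^{(1,3)})^*=aa^{(1,3)}$ gives $a^*+1-aa^{(1,3)}\in R^{-1}$. The factorization then presents $v=a^*a+1-aa^{(1,3)}$ as a product of two units, so $v\in R^{-1}$, for every choice of $a^{(1,3)}$. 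The implication (ii) $\Rightarrow$ (iii) is trivial.

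The crux is (iii) $\Rightarrow$ (i), and this is the only place Dedekind-finiteness is used. From the factorization, invertibility of $v$ makes the right factor $a+1-aa^{(1,3)}$ left invertible, with explicit left inverse $v^{-1}(a^*+1-aa^{(1,3)})$. In a Dedekind-finite ring a one-sided inverse is automatically two-sided, so $a+1-aa^{(1,3)}\in R^{-1}$; Proposition \ref{group inverse} then gives $a\in R^\#$. Combined with the standing hypothesis $a\in R^{(1,3)}$, Lemma \ref{expression} delivers $a\in R^{(\#)}$. I expect the main obstacle to be precisely this step --- recognizing that invertibility of $v$ only yields one-sided invertibility of the factor, so that the hypothesis on $R$ is genuinely needed to promote it; everything else is formal manipulation with the quoted results.

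Finally, for the representation: once $a\in R^{(\#)}$ is known and $a^{(1,3)}\in a\{1,3\}$, the element $v=a^*a+1-aa^{(1,3)}$ satisfies condition (iv) of Theorem \ref{core inverse} (by (i) $\Rightarrow$ (iv) applied to this $\{1,3\}$-inverse), so that theorem's formula gives $a^{(\#)}=(a^*a+1-aa^{(1,3)})^{-1}a^*=v^{-1}a^*$. As a sanity check one notes $va=a^*a^2$ (using $aa^{(1,3)}a=a$), hence $a=v^{-1}a^*a^2$, which is exactly the relation from which the argument of Theorem \ref{core inverse}(v) $\Rightarrow$ (i) extracts $a^{(\#)}=v^{-1}a^*$.
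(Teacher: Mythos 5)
Your proposal is correct and takes essentially the same route as the paper: both proofs rest on the single factorization $a^*a+1-aa^{(1,3)}=(a^*+1-aa^{(1,3)})(a+1-aa^{(1,3)})$ (the paper writes it as $v=uu^*$) and invoke Dedekind-finiteness exactly once, to promote the resulting one-sided invertibility of a factor to two-sided invertibility. The only difference is bookkeeping: the paper dispatches the equivalences and the formula $a^{(\#)}=v^{-1}a^*$ in one stroke by citing Corollary \ref{1 core}, whereas you assemble the same conclusions from Proposition \ref{group inverse}, Lemma \ref{expression}, and the formula in Theorem \ref{core inverse}.
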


\begin{proof} Let $u=a^*+1-aa^{(1,3)}$ and $v=a^*a+1-aa^{(1,3)}$.
Then $v=uu^*$. As $R$ is a Dedekind-finite ring, then $v\in R^{-1}$
if and only if $u\in R^{-1}$. By Corollary \ref{1 core},
$a^{(\#)}=(u^{-1})^*u^{-1}a^*=(uu^*)^{-1}a^*=v^{-1}a^*$.
\end{proof}

\section{Core, dual core and Moore-Penrose invertibility}

In this section, we mainly characterize the core inverse and dual
core inverse of ring elements. Firstly, new characterizations of the
Moore-Penrose inverse of a regular element are given by one-sided
invertibilities. One can find that some parts of the following
Theorem \ref{one-sided MP} were given in \cite[Theorem 3.3]{Zhu
reverse}. Herein, a new proof is given.

\begin{theorem} \label{one-sided MP} Let $a\in R$ be regular with an inner inverse $a^{-}$. Then the following conditions are equivalent{\rm :}

\emph{(i)} $a\in R^\dag$.

\emph{(ii)} $aa^*+1-a a^{-}$ is right invertible.

\emph{(iii)} $a^*a+1-a^{-}a$ is right invertible.

\emph{(iv)} $aa^*a a^{-}+1-a a^{-}$ is right invertible.

\emph{(v)} $a^{-}aa^*a+1- a^{-}a$ is right invertible.

\emph{(vi)} $aa^*+1-aa^{-}$ is left invertible.

\emph{(vii)} $a^*a+1-a^{-}a$ is left invertible.

\emph{(viii)} $aa^*a a^{-}+1-a a^{-}$ is left invertible.

\emph{(ix)} $a^{-}aa^*a+1- a^{-}a$ is left invertible.
\end{theorem}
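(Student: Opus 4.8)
The plan is to prove that all eight one-sided invertibility statements are equivalent to $a\in R^\dag$ by funnelling them through Lemma~\ref{star regular}, which characterises $a\in R^\dag$ as $a\in aa^*aR$ and as $a\in Raa^*a$. I would first split the conditions into a right-invertible family (ii)--(v) and a left-invertible family (vi)--(ix), and link the members of each family by Jacobson's Lemma (Lemma~\ref{Jlemma}). Writing $u_2=aa^*+1-aa^-=1+a(a^*-a^-)$ and $u_3=a^*a+1-a^-a=1+(a^*-a^-)a$ exhibits $u_2,u_3$ as a Jacobson pair, so $u_2$ is right (resp.\ left) invertible iff $u_3$ is; similarly $u_4=aa^*aa^-+1-aa^-=1+(aa^*a-a)a^-$ and $u_5=a^-aa^*a+1-a^-a=1+a^-(aa^*a-a)$ form a Jacobson pair. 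This yields (ii)$\Leftrightarrow$(iii), (vi)$\Leftrightarrow$(vii), (iv)$\Leftrightarrow$(v), (viii)$\Leftrightarrow$(ix) at once, so only one representative per family, together with the link to (i), needs separate treatment.

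For the implications into (i) I would use an absorption trick built on the idempotents $p=aa^-$ and $q=a^-a$, which satisfy $pa=a=aq$ and $paa^*=aa^*$. For instance, from $u_3s=1$ and $qu_3=a^-aa^*a$ one gets $a^-aa^*a\,s=q=a^-a$; left-multiplying by $a$ and using $aa^-a=a$ collapses this to $aa^*a\,s=a$, so $a\in aa^*aR$ and hence $a\in R^\dag$ by Lemma~\ref{star regular}. An entirely analogous computation with $pu_4=aa^*aa^-$ handles the remaining right-invertible representative, while the mirror arguments (right-multiplying by $p$ or $q$, using $u_2p=u_4p=aa^*aa^-$) place $a\in Raa^*a$ and dispatch (vi) and (viii). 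Thus each of the eight conditions already forces $a\in R^\dag$.

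The real obstacle is the converse, (i)$\Rightarrow$ the eight invertibilities, because $a^-$ is an arbitrary inner inverse and so $p=aa^-$ need not be self-adjoint; the classical invertibility of $aa^*+1-aa^\dag$ cannot be invoked verbatim. My plan is to prove that the two central elements $u_2$ and $u_4$ are two-sided units, after which Jacobson's Lemma delivers $u_3$ and $u_5$. I would introduce the self-adjoint projection $e=aa^\dag$, note the absorption relations $ep=p$ and $pe=e$ (so that $e-p$ is square-zero and $(e-p)a=0$), and verify that the self-adjoint model $w=aa^*+1-e$ has the explicit two-sided inverse $(a^\dag)^*a^\dag+1-e$. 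The identity $(e-p)w=e-p$ then gives the factorization $u_2=(1+(e-p))w$, a product of two units since $(e-p)^2=0$ makes $1+(e-p)$ invertible; hence $u_2\in R^{-1}$. For $u_4$ I would use $u_4=u_2-aa^*(1-p)$ together with $(1-p)aa^*=0$: the element $S=aa^*(1-p)$ is square-zero, satisfies $Su_2=S$ because $(1-p)u_2=1-p$, so $u_4=u_2-S=(1-S)u_2$ is again a product of units and $u_4\in R^{-1}$.

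In summary, Jacobson's Lemma reduces the eight conditions to the two central elements $u_2,u_4$; absorption by $aa^-$ and $a^-a$ shows each condition forces $a\in aa^*aR$ or $a\in Raa^*a$, hence (i); and the only genuine computation is the pair of square-zero factorizations $u_2=(1+(e-p))w$ and $u_4=(1-S)u_2$, which upgrade the self-adjoint invertibility of $aa^*+1-aa^\dag$ to an arbitrary inner inverse. The delicate points to check are precisely the annihilation and nilpotency identities $(e-p)a=0$, $(1-p)aa^*=0$, $(e-p)^2=0$ and $S^2=0$, since these are exactly what make the two factorizations valid.
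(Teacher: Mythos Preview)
Your proposal is correct, and the Jacobson reductions together with the absorption arguments for the implications into (i) are essentially the paper's proof (the paper writes $a=au_3s=aa^*as$ directly, which is your $qu_3s=q$ followed by left multiplication by $a$). The paper also notes one extra Jacobson link you omit, namely $u_2=1+aa^-(aa^*-1)$ and $u_4=1+(aa^*-1)aa^-$, so (ii)$\Leftrightarrow$(iv) directly; you compensate by treating $u_2$ and $u_4$ as separate representatives, which is fine.

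Where you genuinely diverge is in (i)$\Rightarrow$(ii)--(ix). The paper does not invoke the self-adjoint model $w=aa^*+1-aa^\dag$ at all: it uses Lemma~\ref{star regular} to write $a=aa^*ax$ and simply checks that $axa^-+1-aa^-$ is a right inverse of $u_4$ (and dually $a^-ya+1-a^-a$ is a left inverse of $u_5$ from $a=yaa^*a$), then pulls the one-sided invertibility of $u_2,u_3$ back through Jacobson. Your route instead proves the stronger fact that $u_2,u_4$ are two-sided units via the square-zero factorizations $u_2=(1+(e-p))w$ and $u_4=(1-S)u_2$. This is more conceptual---it isolates exactly how replacing $a^\dag$ by an arbitrary $a^-$ perturbs the unit by a unipotent factor, and it yields (ii), (iv), (vi), (viii) in one stroke rather than two dual computations---but it costs the verification of four nilpotency/annihilation identities, whereas the paper's explicit one-sided inverse is a one-line check. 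Both approaches are sound; the paper's is shorter, yours explains \emph{why} the choice of inner inverse is harmless.
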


\begin{proof}

(ii) $\Leftrightarrow$ (iii), (ii) $\Leftrightarrow$ (iv), (iii)
$\Leftrightarrow$ (v), (vi) $\Leftrightarrow$ (vii), (vi)
$\Leftrightarrow$ (viii) and (vii) $\Leftrightarrow$ (ix) follow
from Lemma  \ref{Jlemma}.

(i) $\Rightarrow$ (ii). If $a\in R^\dag$, then there exists $x\in R$
such that $a=aa^*ax$ from Lemma \ref{star regular}. As
$(aa^*aa^{-}+1-aa^{-})(axa^{-}+1-aa^{-})=1$, then
$aa^*aa^{-}+1-aa^{-}$ is right invertible. Hence, $aa^*+1-aa^{-}$ is
right invertible by Lemma \ref{Jlemma}.

(ii) $\Rightarrow$ (i). As $aa^*+1-a a^{-}$ is right invertible,
then $a^*a+1-a^{-}a$ is also right invertible by Lemma \ref{Jlemma}.
Hence, there is $s\in R$ such that $(a^*a+1-a^{-}a)s=1$. We have
$a=a(a^*a+1-a^{-}a)s=aa^*as\in aa^*aR$. So, $a\in R^\dag$ by Lemma
\ref{star regular}.

(i) $\Rightarrow$ (vi). It is similar to the proof of (i)
$\Rightarrow$ (ii).

(vi) $\Rightarrow$ (i). As $aa^*+1-a a^{-}$ is left invertible, then
$t(aa^*+1-a a^{-})=1$ for some $t\in R$. Also, $a=1 \cdot a=
t(aa^*+1-a a^{-})a=taa^*a\in Raa^*a$, which ensures $a\in R^\dag$
according to Lemma \ref{star regular}.
\end{proof}

As a special result of Theorem \ref{one-sided MP}, it follows that

\begin{corollary} \label{MP inverse} {\rm \cite[Theorem 1.2]{Patricio and Mendes}} Let $a\in R$ be regular with an inner inverse $a^{-}$.
Then the following conditions are equivalent{\rm :}

\emph{(i)} $a\in R^\dag$.

\emph{(ii)} $aa^*+1-aa^{-}$ is invertible.

\emph{(iii)} $a^*a+1-a^{-}a$ is invertible.

\emph{(iv)} $aa^*aa^{-}+1-aa^{-}$ is invertible.

\emph{(v)} $a^{-}aa^*a+1-a^{-}a$ is invertible.

\end{corollary}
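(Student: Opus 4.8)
The plan is to read off each equivalence directly from Theorem \ref{one-sided MP}, using the elementary fact that an element of a ring is (two-sided) invertible precisely when it is simultaneously left invertible and right invertible. Indeed, if $sx=1$ and $xt=1$, then $s=s(xt)=(sx)t=t$, so any element possessing both a left and a right inverse is a unit, and the converse is trivial.

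With this in hand, I would pair up the nine conditions of Theorem \ref{one-sided MP}. Each of the four ring elements appearing in the corollary occurs twice in the theorem, once tagged ``right invertible'' and once ``left invertible''. Concretely, condition (ii) of the corollary, that $aa^*+1-aa^{-}$ is invertible, merges conditions (ii) and (vi) of the theorem; condition (iii), that $a^*a+1-a^{-}a$ is invertible, merges (iii) and (vii); condition (iv), that $aa^*aa^{-}+1-aa^{-}$ is invertible, merges (iv) and (viii); and condition (v), that $a^{-}aa^*a+1-a^{-}a$ is invertible, merges (v) and (ix).

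Then for each element $E$ among these four, the argument runs uniformly: if $a\in R^\dag$, Theorem \ref{one-sided MP} gives that $E$ is both right invertible and left invertible, whence $E\in R^{-1}$; conversely, if $E\in R^{-1}$, then in particular $E$ is right invertible, so again by Theorem \ref{one-sided MP} we recover $a\in R^\dag$. This delivers (i) $\Leftrightarrow$ (ii), (i) $\Leftrightarrow$ (iii), (i) $\Leftrightarrow$ (iv) and (i) $\Leftrightarrow$ (v) simultaneously, and hence the equivalence of all five conditions.

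There is essentially no obstacle here: the substantive content, namely producing the requisite one-sided inverses from a Moore-Penrose inverse via Lemma \ref{star regular} and, conversely, reconstructing $a\in R^\dag$ from a single one-sided inverse, has already been carried out in Theorem \ref{one-sided MP}. The corollary is a repackaging that fuses the ``right'' and ``left'' halves of that theorem into a two-sided statement, so the only thing to verify is the bookkeeping of which theorem conditions pair into which corollary condition.
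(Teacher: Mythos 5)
Your proposal is correct and matches the paper's treatment: the paper states this corollary with no separate proof, presenting it precisely as ``a special result of Theorem \ref{one-sided MP}'', and your pairing of the right-invertibility conditions (ii)--(v) with the left-invertibility conditions (vi)--(ix), together with the standard fact that an element with both a left and a right inverse is a unit, is exactly the intended derivation. The bookkeeping you describe is all there is to it.
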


The following Theorems \ref{right regular and MP} and \ref{left
regular and MP} were given in \cite{Zhu reverse} by authors. Next,
we give different purely ring theoretical proofs.

\begin{theorem} \label{right regular and MP} Let $a\in R$ be regular with an inner inverse $a^{-}$. Then the following conditions are equivalent{\rm :}

\emph{(i)} $a\in R^\dag$ and $aR=a^2R$.

\emph{(ii)} $u=aa^*a+1-aa^{-}$ is right invertible.

\emph{(iii)} $v=a^*a^2+1-a^{-}a$ is right invertible.
\end{theorem}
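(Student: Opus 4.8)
The plan is to run the cycle (i)$\Rightarrow$(ii)$\Rightarrow$(iii)$\Rightarrow$(i) through a single pivot condition, namely $a\in aa^*a^2R$, to which all three statements get tied. First, (ii)$\Leftrightarrow$(iii) is immediate from Jacobson's Lemma \ref{Jlemma}: setting $b=a^*a-a^-$ one has $u=1+ab$ and $v=1+ba$, and Lemma \ref{Jlemma} turns a right inverse of $1+ab$ into an explicit right inverse of $1+ba$ and conversely. So it remains to handle (iii)$\Rightarrow$(i) and (i)$\Rightarrow$(ii).

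For (iii)$\Rightarrow$(i), a direct computation gives $av=aa^*a^2$, so if $vw'=1$ then $a=a(vw')=aa^*a^2w'\in aa^*a^2R$; in particular $a\in aa^*aR$, whence $a\in R^\dag$ by Lemma \ref{star regular}. The delicate point is to upgrade $a\in aa^*a^2R$ to $aR=a^2R$. Here I would exploit that $aa^*$ is Moore--Penrose invertible with $g:=(aa^*)^\dag=(a^\dag)^*a^\dag$, together with the identities $aa^*g=gaa^*=aa^\dag$ and $g=aa^\dag g\in aR$. These make left multiplication by $aa^*$ and by $g$ mutually inverse bijections of the right ideal $aR$ onto itself. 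Now $a\in aa^*a^2R$ forces $aR\subseteq aa^*a^2R\subseteq aR$, i.e. $aR=aa^*a^2R=aa^*(a^2R)$; applying left multiplication by $g$ to the identity $aa^*(a^2R)=aR$ and using that $\lambda_g\lambda_{aa^*}$ is the identity on $aR$ yields $a^2R=aR$. This bijection step is the main obstacle, since no purely formal manipulation removes the leading factor $aa^*$ from $a=aa^*a^2c$ (every attempt recovers the same relation); the Moore--Penrose inverse of $aa^*$ is exactly what inverts that factor on $aR$, and it is where $a\in R^\dag$ is used essentially.

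For (i)$\Rightarrow$(ii) I would first record that (i) implies the pivot $a\in aa^*a^2R$. From $a\in R^\dag$ one has $a=aa^*(a^\dag)^*$ and $(a^\dag)^*=aa^\dag(a^\dag)^*\in aR$, say $(a^\dag)^*=ah$, so $a=aa^*ah$; substituting $a=a^2z$ (which is the content of $aR=a^2R$) into the middle factor via $aa^*a=aa^*a^2z$ gives $a=aa^*a^2zh\in aa^*a^2R$. Then I would produce an explicit right inverse of $u=aa^*a+1-aa^-$ of the form $w=1-aa^-+aa^-p$. Since $(1-aa^-)w=1-aa^-$ automatically, the equation $uw=1$ reduces to the single requirement $aa^*aw=aa^-$, i.e. to solving $aa^*a^2a^-p=aa^--aa^*a+aa^*a^2a^-$ for $p$. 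Because $a^2a^-a=a^2$ one always has $aa^*a^2a^-R=aa^*a^2R$, and because $a\in aa^*a^2R$ one has $aR=aa^*a^2R$; hence each term on the right-hand side ($aa^-$, $aa^*a$, $aa^*a^2a^-$) lies in $aa^*a^2R=aa^*a^2a^-R$, so a solution $p$ exists and $w$ is the desired right inverse.

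The remaining computations (the identities $aa^*g=aa^\dag$, $g\in aR$, the cancellations $(1-aa^-)w=1-aa^-$ and $aa^*a\cdot aa^-=aa^*a^2a^-$, and the four Moore--Penrose equations certifying $g=(aa^*)^\dag$) are routine once set up. I expect the only genuinely non-formal ingredient to be the bijection argument converting $a\in aa^*a^2R$ into $aR=a^2R$; everything else is Jacobson's Lemma together with the standard identities relating $a$, $a^\dag$ and the projections $aa^-$, $aa^\dag$.
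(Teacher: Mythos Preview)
Your argument is correct, but the paper's route is considerably shorter because it exploits two ready-made factorizations that you did not use. For (i)$\Rightarrow$(ii) the paper simply writes
\[
u=aa^*a+1-aa^{-}=(aa^*aa^{-}+1-aa^{-})(a+1-aa^{-}),
\]
where the first factor is invertible by Corollary~\ref{MP inverse} (since $a\in R^\dag$) and the second is right invertible by the Puystjens--Hartwig criterion (since $aR=a^2R$); no explicit right inverse of $u$ needs to be built. For (iii)$\Rightarrow$(i) the paper reverses this: after obtaining $a\in R^\dag$ exactly as you do, it writes
\[
v=a^*a^2+1-a^{-}a=(a^*a+1-a^{-}a)(a^{-}a^2+1-a^{-}a),
\]
notes that the first factor is invertible (again Corollary~\ref{MP inverse}), hence the second factor $a^{-}a^2+1-a^{-}a$ is right invertible, and then Jacobson and Puystjens--Hartwig give $aR=a^2R$ directly. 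This sidesteps entirely your bijection argument with $(aa^*)^\dag$.

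Your approach has the merit of being self-contained at the level of right-ideal manipulations and of making the pivot $a\in aa^*a^2R$ explicit; the paper's approach is cleaner because the two conditions in (i) correspond exactly to the two factors of $u$ (respectively $v$), so the equivalence becomes a one-line product decomposition. One small wording issue: your claim that ``$a^2a^{-}a=a^2$ always gives $aa^*a^2a^{-}R=aa^*a^2R$'' only yields the inclusion $aa^*a^2R\subseteq aa^*a^2a^{-}R$ in general; the reverse inclusion uses $aR=aa^*a^2R$, which you establish separately. Since you only need the inclusion, the argument is unaffected, but the sentence should be rephrased.
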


\begin{proof}

(i) $\Rightarrow$ (ii). As $aR=a^2R$, then $a+1-aa^{-}$ is right
invertible by \cite[Theorem 1]{Puystjens}. Also, from $a\in R^\dag$
we can conclude $aa^*aa^{-}+1-aa^{-}$ is invertible by Corollary
\ref{MP inverse}. Hence,
$u=aa^*a+1-aa^{-}=(aa^*aa^{-}+1-aa^{-})(a+1-aa^{-})$ is right
invertible.

(ii) $\Leftrightarrow$ (iii) follows from Lemma \ref{Jlemma}.

(iii) $\Rightarrow$ (i). Since $v$ is right invertible, there exists
$v_1\in R$ such that $vv_1=1$. Then
$a=avv_1=a(a^*a^2+1-a^{-}a)v_1=aa^*a^2v_1\in aa^*aR$ and hence $a\in
R^\dag$ by Lemma \ref{star regular}. It follows from Corollary
\ref{MP inverse} that $a\in R^\dag$ implies that $w=a^*a+1-a^{-}a\in
R^{-1}$. As $v=(a^*a+1-a^{-}a)(a^{-}a^2+1-a^{-}a)$ is right
invertible, then $a^{-}a^2+1-a^{-}a=w^{-1}v$ is right invertible,
and hence $a+1-a^{-}a$ is also right invertible. So, $aR=a^2R$ by
\cite[Theorem 1]{Puystjens}.
\end{proof}

\begin{remark} \label{ex1} {\rm} {\rm In general, $a\in R^\dag$ and $aR=a^2R$ may not imply $a\in R^\#$. For example,
let $R$ be the ring of all infinite complex matrices with finite
nonzero elements in each column with transposition as involution.
Let $a=\Sigma_{i=1}^\infty e_{i,i+1}\in R$, where $e_{i,j}$ denotes
the infinite matrix whose $(i,j)$-entry is 1 and other entries are
zero. Then $aa^*=1$ and $a^*a=\Sigma_{i=2}^\infty e_{i,i}$. So,
$a^\dag=a^*$ and $aR=a^2R$. But $a\notin R^\#$. In fact, if $a\in
R^\#$, then $a^\#a=aa^\#=aa^\#aa^*=aa^*=1$, which would imply that
$a$ is invertible. This is a contradiction.}
\end{remark}

Dually, we have the following result.

\begin{theorem} \label{left regular and MP} Let $a\in R$ be regular with an inner inverse $a^{-}$. Then the following conditions are equivalent{\rm :}

\emph{(i)} $a\in R^\dag$ and $Ra=Ra^2$.

\emph{(ii)} $u=aa^*a+1-a^{-}a$ is left invertible.

\emph{(iii)} $v=a^2 a^*+1-a a^{-}$ is left invertible.
\end{theorem}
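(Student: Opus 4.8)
The plan is to obtain this theorem from Theorem~\ref{right regular and MP} by passing through the involution; this is precisely the ``dual'' meant by the sentence preceding the statement. Since $a$ is regular with inner inverse $a^{-}$, the element $a^*$ is regular with inner inverse $(a^{-})^*$, because $a^*(a^{-})^*a^*=(aa^{-}a)^*=a^*$. First I would apply Theorem~\ref{right regular and MP} to $a^*$ together with this inner inverse, obtaining that the following are equivalent: (i$'$)~$a^*\in R^\dag$ and $a^*R=(a^*)^2R$; (ii$'$)~$a^*aa^*+1-a^*(a^{-})^*$ is right invertible; (iii$'$)~$a(a^*)^2+1-(a^{-})^*a^*$ is right invertible. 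It then remains only to translate (i$'$)--(iii$'$) back to statements about $a$.

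The translation rests on the fact that $x\mapsto x^*$ is an additive anti-automorphism: it reverses products, interchanges left and right invertibility, and carries a right ideal $bR$ onto the left ideal $Rb^*$. Using this I would verify the three correspondences. For (i): $a^*\in R^\dag$ is equivalent to $a\in R^\dag$ since $(a^\dag)^*=(a^*)^\dag$, and applying the involution to the ideals in $a^*R=(a^*)^2R$ yields $Ra=Ra^2$ (here $(a^*)^2=(a^2)^*$). For (ii): right invertibility of $a^*aa^*+1-a^*(a^{-})^*$ is equivalent to left invertibility of its adjoint, and $(a^*aa^*+1-a^*(a^{-})^*)^*=aa^*a+1-a^{-}a=u$. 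For (iii): right invertibility of $a(a^*)^2+1-(a^{-})^*a^*$ is equivalent to left invertibility of its adjoint, and $(a(a^*)^2+1-(a^{-})^*a^*)^*=a^2a^*+1-aa^{-}=v$. Hence (i)$\Leftrightarrow$(ii)$\Leftrightarrow$(iii) transfers verbatim from the equivalences for $a^*$.

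The main thing to watch is bookkeeping rather than any genuine difficulty: one must confirm that the involution turns the ``right invertible'' conclusions of Theorem~\ref{right regular and MP} into the ``left invertible'' hypotheses here, and that the idempotent $a^*(a^{-})^*$ of $a^*$ adjoints to $a^{-}a$ (not $aa^{-}$), which is exactly why $u$ carries the term $1-a^{-}a$ while $v$ carries $1-aa^{-}$. Should a self-contained argument be preferred over the appeal to duality, one can instead mimic the proof of Theorem~\ref{right regular and MP}: factor $v=(a^2a^{-}+1-aa^{-})(aa^*+1-aa^{-})$, use $a\in R^\dag$ (from $a\in Raa^*a$ via Lemma~\ref{star regular}) and Corollary~\ref{MP inverse} to see the second factor is a unit, then pass from left invertibility of $a^2a^{-}+1-aa^{-}=1+a(a-1)a^{-}$ to that of $a+1-a^{-}a=1+(a-1)a^{-}a$ by Lemma~\ref{Jlemma}, and conclude $Ra=Ra^2$ from the left-handed form of \cite[Theorem~1]{Puystjens}. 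The involution route is shorter and is evidently what ``Dually'' signals.
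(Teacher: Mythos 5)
Your proposal is correct, and it is exactly the argument the paper intends: the paper offers no written proof, only the word ``Dually,'' which signals precisely the involution-transfer from Theorem~\ref{right regular and MP} that you carry out (apply that theorem to $a^*$ with inner inverse $(a^{-})^*$, then use that $*$ swaps left/right invertibility and carries $a^*R=(a^*)^2R$ to $Ra=Ra^2$, with the adjoints of the two displayed elements being exactly $u$ and $v$). Your bookkeeping --- including the point that $a^*(a^{-})^*$ adjoints to $a^{-}a$, which explains why $u$ has $1-a^{-}a$ while $v$ has $1-aa^{-}$ --- checks out, so nothing further is needed.
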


We next give existence criteria and representations of the core
inverse and of the dual core inverse of a regular element in a ring.

\begin{theorem} \label{chen} Let $a\in R$ be regular with an inner inverse $a^{-}$. Then the following conditions are equivalent{\rm :}

\emph{(i)} $a\in R^\# \cap R^\dag$.

\emph{(ii)} $a\in R^{(\#)} \cap R_{(\#)}$.

\emph{(iii)} $u=aa^*a+1-aa^{-}\in R^{-1}$.

\emph{(iv)} $v=aa^*a+1-a^{-}a \in R^{-1}$.

\emph{(v)} $ s=a^*a^2+1-a^{-}a \in R^{-1}$.

\emph{(vi)} $t=a^2a^*+1-aa^{-} \in R^{-1}$.

In this case, \begin{eqnarray*}
&&a^{(\#)}=u^{-1}aa^*,~ a_{(\#)}=a^*av^{-1}, \\
&&a^\dag=(t^{-1}a^2)^*=(a^2s^{-1})^*,\\
&&a^\#=(aa^*t^{-1})^2a=a(s^{-1}a^*a)^2.
\end{eqnarray*}
\end{theorem}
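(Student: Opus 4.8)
The plan is to separate the two ingredients hidden in each of the units $u,v,s,t$: a ``Moore--Penrose part'' and a ``group-inverse part''. The clean starting point is the equivalence (i) $\Leftrightarrow$ (ii), immediate from Lemma \ref{expression} together with the standard fact (recorded after Lemma \ref{1314}) that $a\in R^\dag$ iff $a\in R^{(1,3)}\cap R^{(1,4)}$: indeed $a\in R^{(\#)}\cap R_{(\#)}$ means $a\in R^\#\cap R^{(1,3)}\cap R^{(1,4)}=R^\#\cap R^\dag$. For the units, the key observation is the pair of factorizations
\[
u=(aa^*aa^-+1-aa^-)(a+1-aa^-),\qquad v=(a+1-a^-a)(a^-aa^*a+1-a^-a),
\]
each verified by a direct expansion using only $aa^-a=a$. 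In each product the first (resp.\ last) factor is invertible iff $a\in R^\dag$ by Corollary \ref{MP inverse}, while the other factor is invertible iff $a\in R^\#$ by Proposition \ref{group inverse} (with $k=1$). Hence (i) yields (iii) and (iv) at once, as products of units.

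For the converses I would extract $R^\dag$ first and then $R^\#$. If $u\in R^{-1}$, then the left factor $aa^*aa^-+1-aa^-$ is right invertible, so $a\in R^\dag$ by Theorem \ref{one-sided MP}; but then that factor is in fact a unit by Corollary \ref{MP inverse}, whence the right factor $a+1-aa^-$ is a unit and $a\in R^\#$ by Proposition \ref{group inverse}. The implication (iv) $\Rightarrow$ (i) is symmetric, using that $v\in R^{-1}$ forces $a^-aa^*a+1-a^-a$ to be left invertible. Finally (iii) $\Leftrightarrow$ (v) and (iv) $\Leftrightarrow$ (vi) are instances of Jacobson's Lemma \ref{Jlemma}: writing $u=1+a(a^*a-a^-)$ and $s=1+(a^*a-a^-)a$, and $v=1+(aa^*-a^-)a$ and $t=1+a(aa^*-a^-)$, the pairs $u,s$ and $v,t$ are each simultaneously invertible. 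This closes the six-fold equivalence.

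For the representations I would introduce $x:=t^{-1}a^2=a^2s^{-1}$; the two expressions agree because $ta^2=a^2a^*a^2=a^2s$ (using $a^2a^-a=a^2$). From $ta=a^2a^*a$ and $as=aa^*a^2$ one gets $a=xa^*a$ and $a=aa^*x$, and a one-line computation gives $xa^*x=t^{-1}(a^2a^*a^2)s^{-1}=x$. Taking adjoints, $x^*$ is simultaneously a $\{1,3\}$- and a $\{1,4\}$-inverse (Lemma \ref{1314}) and is reflexive, so $x^*=a^\dag$, giving $a^\dag=(t^{-1}a^2)^*=(a^2s^{-1})^*$. The same two identities $a=aa^*x=xa^*a$ feed Lemma \ref{Hartwig group inverse}: $a=(aa^*t^{-1})a^2$ and $a=a^2(s^{-1}a^*a)$ yield $a^\#=(aa^*t^{-1})^2a=a(s^{-1}a^*a)^2$. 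For the core inverse I would check the three identities of Lemma \ref{core 5} for $x_0:=u^{-1}aa^*$: the relation $ua=aa^*a^2$ gives $x_0a^2=a$ directly, and the Jacobson form $u^{-1}=1-as^{-1}(a^*a-a^-)$, together with $xa^*a=a$ and $x=xa^-a$, reduces $ax_0$ to $xa^*=aa^\dag$ and $ax_0^2$ to $x_0$. The dual core formula $a_{(\#)}=a^*av^{-1}$ follows from the involutive dual of this argument, with $av=a^2a^*a$ supplying $a^2a_{(\#)}=a$.

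The main obstacle is the Hermitian condition $(ax_0)^*=ax_0$ in the core-inverse representation: unlike $x_0a^2=a$, it is not purely formal and genuinely needs the explicit Jacobson expression for $u^{-1}$ together with the auxiliary identity $x=xa^-a$ (which itself comes from $a^2(1-a^-a)=0$ and $s^{-1}(1-a^-a)=1-a^-a$) to collapse $au^{-1}aa^*$ to the Hermitian projection $aa^\dag$. Everything else is either a one-step appeal to a cited result or a bookkeeping expansion of the form $(\text{unit})\cdot a=(\text{monomial in }a,a^*)$.
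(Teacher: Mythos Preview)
Your equivalence proof is essentially the paper's: the same factorization $u=(aa^*aa^-+1-aa^-)(a+1-aa^-)$, the same Jacobson pairing of $u,s$ and $v,t$, and the same two-step extraction of $R^\dag$ first and then $R^\#$ from the invertibility of $u$ (the paper cites Theorem~\ref{right regular and MP} rather than Theorem~\ref{one-sided MP} at the first step, but this is cosmetic).

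The derivations of the representations diverge in one place. For $a^{(\#)}=u^{-1}aa^*$ you verify the three conditions of Lemma~\ref{core 5} directly, invoking the explicit Jacobson expression $u^{-1}=1-as^{-1}(a^*a-a^-)$ to reduce $au^{-1}aa^*$ to $xa^*=aa^\dag$; you correctly flag the Hermitian condition as the non-formal step. The paper bypasses this entirely via Lemma~\ref{expression}: from $ua=aa^*a^2$ one gets $a=(u^{-1}aa^*)a^2$, hence $a^\#=(u^{-1}aa^*)^2a$ by Lemma~\ref{Hartwig group inverse}, and then
\[
a^{(\#)}=a^\#aa^{(1,3)}=u^{-1}aa^*\cdot u^{-1}aa^*a^2\cdot a^{(1,3)}=u^{-1}aa^*\cdot a\,a^{(1,3)}=u^{-1}aa^*(aa^{(1,3)})^*=u^{-1}aa^*.
\]
No Jacobson form, no separate Hermitian verification: the structural formula $a^{(\#)}=a^\#aa^{(1,3)}$ already carries the self-adjointness. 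Your route is correct but heavier; the paper's route shows that your ``main obstacle'' is an artifact of checking Lemma~\ref{core 5} rather than using Lemma~\ref{expression}. For $a^\dag$ the two arguments are close variants: you show $(t^{-1}a^2)^*$ is a reflexive $\{1,3\}$- and $\{1,4\}$-inverse directly, whereas the paper applies the composite formula $a^\dag=a^{(1,4)}aa^{(1,3)}$ and simplifies via $(a^2)^*a(a^2)^*=(as)^*a^*$; both are short.
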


\begin{proof}

(i) $\Leftrightarrow$ (ii) by Lemma \ref{expression}.

(iii) $\Leftrightarrow$ (v) and (iv) $\Leftrightarrow$ (vi) are
obtained by Jacobson's Lemma.

(i) $\Rightarrow$ (iii). From Proposition \ref{group inverse} and
Corollary \ref{MP inverse}, $a\in R^\# \cap R^\dag$ implies that
$a+1-aa^{-}$ and $aa^*aa^{-}+1-aa^{-}$ are both invertible. Hence,
$u=aa^*a+1-aa^{-}=(aa^*aa^{-}+1-aa^{-})(a+1-aa^{-})$ is invertible.

(iii) $\Rightarrow$ (i). Suppose that $u=aa^*a+1-aa^{-}$ is
invertible. Then $a\in R^\dag$ from Theorem \ref{right regular and
MP} and hence $aa^*aa^{-}+1-aa^{-}$ is invertible by Corollary
\ref{MP inverse}. As $u=(aa^*aa^{-}+1-aa^{-})(a+1-aa^{-})$ is
invertible, then $a+1-aa^{-}=(aa^*aa^{-}+1-aa^{-})^{-1}u$ is
invertible, i.e., $a\in R^\#$ by Proposition \ref{group inverse}.

(i) $\Leftrightarrow$ (iv) can be obtained by a similar proof of (i)
$\Leftrightarrow$ (iii).

Next, we give representations of $a^{(\#)}$, $a_{(\#)}$, $a^\dag$
and $a^\#$, respectively.

Since $ua=aa^*a^2$, $a=(u^{-1}aa^*)a^2$. As $a^\#$ exists, then
$a^\#=(u^{-1}aa^*)^2a$ by Lemma \ref{Hartwig group inverse}. From
Lemma \ref{expression}, we have
\begin{eqnarray*}
a^{(\#)}&=&a^\#aa^{(1,3)}=u^{-1}aa^* u^{-1}aa^*a^2a^{(1,3)}\\
&=&u^{-1}aa^*aa^{(1,3)}=u^{-1}aa^*(aa^{(1,3)})^*\\
&=&u^{-1}aa^*.
\end{eqnarray*}

Similarly, it follows that $a^\#=a(a^*av^{-1})^2$ and
$a_{(\#)}=a^*av^{-1}$.

As $as=aa^*a^2$ and $ta=a^2a^*a$, then we have
$a=aa^*(a^2s^{-1})=(t^{-1}a^2)a^*a$. It follows from Lemma
\ref{1314} that $a\in R^\dag$ and
\begin{eqnarray*}
a^\dag&=&(a^2s^{-1})^*a(t^{-1}a^2)^*=(s^{-1})^*(a^2)^*a(a^2)^*(t^{-1})^*\\
&=&(s^{-1})^*(aa^*a^2)^*a^*(t^{-1})^*=(s^{-1})^*(as)^*a^*(t^{-1})^*\\
&=&(a^*)^2(t^{-1})^*\\
&=&(t^{-1}a^2)^*.
\end{eqnarray*}

Similarly, $a^\dag=(a^2s^{-1})^*$.

Noting $sa^{-}a=a^*a^2$, we have $a^{-}a=s^{-1}a^*a^2$ and
$a=aa^{-}a=(as^{-1}a^*)a^2$. Hence, it follows that
$a^\#=(as^{-1}a^*)^2a=a(s^{-1}a^*a)^2$ since $a\in R^\#$.

We can also get $a^\#=(aa^*t^{-1})^2a$ by a similar way.
\end{proof}

\begin{corollary} \label{core inverse 1} Let $a\in R^\dag$. Then the following conditions are equivalent{\rm :}

\emph{(i)} $a\in R^{(\#)}$.

\emph{(ii)} $a\in R_{(\#)}$.

\emph{(iii)} $u=aa^*a+1-aa^{\dag} \in R^{-1}$.

\emph{(iv)} $v=aa^*a+1-a^{\dag} a \in R^{-1}$.

\emph{(v)} $s=a^*a^2+1-a^{\dag} a \in R^{-1}$.

\emph{(vi)} $t=a^2a^*+1-aa^{\dag} \in R^{-1}$.

In this case,
\begin{eqnarray*}
&&  a^{(\#)}=u^{-1}aa^*=aa^*t^{-1},\\
&&a_{(\#)}=a^*av^{-1}=s^{-1}a^*a.
\end{eqnarray*}
\end{corollary}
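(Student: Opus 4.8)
The plan is to read off this corollary from Theorem \ref{chen} by taking the inner inverse to be $a^\dag$ itself. Since $a\in R^\dag$, the element $a$ is regular and $a^\dag\in a\{1\}$, so I would apply Theorem \ref{chen} with the choice $a^-=a^\dag$. Under this substitution the four unit conditions (iii)--(vi) of the corollary coincide verbatim with conditions (iii)--(vi) of Theorem \ref{chen}, namely $u=aa^*a+1-aa^\dag$, $v=aa^*a+1-a^\dag a$, $s=a^*a^2+1-a^\dag a$ and $t=a^2a^*+1-aa^\dag$. That theorem already shows these four are mutually equivalent and equivalent to its condition $a\in R^\#\cap R^\dag$. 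Because $a\in R^\dag$ is assumed throughout, each of (iii)--(vi) is therefore equivalent simply to $a\in R^\#$.

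It remains to fold in conditions (i) and (ii). Here I would invoke Lemma \ref{expression}: $a\in R^{(\#)}$ if and only if $a\in R^\#\cap R^{(1,3)}$, and $a\in R_{(\#)}$ if and only if $a\in R^\#\cap R^{(1,4)}$. Since $a\in R^\dag$ gives $a\in R^{(1,3)}\cap R^{(1,4)}$ (as recorded in the excerpt), both the $\{1,3\}$- and $\{1,4\}$-invertibility are automatic, so (i) and (ii) each reduce to $a\in R^\#$. This closes the chain: all six statements are equivalent, each being a restatement of $a\in R^\#$ under the standing hypothesis $a\in R^\dag$.

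For the representations, the formulas $a^{(\#)}=u^{-1}aa^*$ and $a_{(\#)}=a^*av^{-1}$ are inherited directly from Theorem \ref{chen} with $a^-=a^\dag$. The genuinely new content is the two right-hand expressions $a^{(\#)}=aa^*t^{-1}$ and $a_{(\#)}=s^{-1}a^*a$, which I would obtain from the commutation identities $u\,aa^*=aa^*\,t$ and $s\,a^*a=a^*a\,v$. To verify the first I would expand both sides and use that $aa^\dag$ is a self-adjoint idempotent together with $a^*=a^*aa^\dag$ (the adjoint of $a=aa^\dag a$), which collapses each side to $aa^*a^2a^*$; the second is symmetric, using $a^*=a^\dag aa^*$ and $aa^\dag a=a$ to collapse each side to $a^*a^2a^*a$. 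Once these identities are in hand, the invertibility of $u,t,s,v$ immediately yields $aa^*t^{-1}=u^{-1}aa^*$ and $s^{-1}a^*a=a^*av^{-1}$, as required.

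The only point needing care, and the closest thing to an obstacle, is the bookkeeping in these two commutation checks, specifically the repeated use of the Penrose equations to rewrite $a^*aa^\dag$ and $a^\dag aa^*$ as $a^*$ and to absorb the idempotents $aa^\dag$, $a^\dag a$. Everything else is a formal consequence of the already-proved Theorem \ref{chen} and Lemma \ref{expression}, so no new structural idea is needed beyond specializing the inner inverse to $a^\dag$.
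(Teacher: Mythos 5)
Your proof is correct and takes essentially the same route as the paper's: both specialize Theorem \ref{chen} to the inner inverse $a^-=a^\dag$, reduce (i) and (ii) to $a\in R^\#$ via Lemma \ref{expression}, and derive the two extra formulas from the same commutation identities $u\,aa^*=aa^*t$ and $sa^*a=a^*av$. The only difference is that you spell out the verification of those identities (correctly, both sides collapsing to $aa^*a^2a^*$ and $a^*a^2a^*a$ respectively), which the paper simply asserts.
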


\begin{proof} As $a\in R^\dag$, then $a\in R^{(\#)}$ if and only if $a\in R^\#$ if and only if $a\in
R_{(\#)}$ by Lemma \ref{expression}. So (i)-(vi) are equivalent by
Theorem \ref{chen}. Moreover, $a^{(\#)}=u^{-1}aa^*$ and
$a_{(\#)}=a^*av^{-1}$. Note that $uaa^*=aa^*t$ and $a^*av=sa^*a$.
Then $u^{-1}aa^*=aa^*t^{-1}$ and $a^*av^{-1}=s^{-1}a^*a$. As
required.
\end{proof}

\begin{proposition} Let $a\in R^\dag$. Then the following conditions
are equivalent{\rm :}

\emph{(i)} $a\in R^{(\#)}$.

\emph{(ii)} $a\in R^\#$.

\emph{(iii)} $a^*+1-aa^\dag\in R^{-1}$.

In this case, $a^\#=(u^{-2})^*a$ and $a^{(\#)}=(u^{-1})^*u^{-1}a^*$,
where $u=a^*+1-aa^\dag$.
\end{proposition}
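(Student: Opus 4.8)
The plan is to reduce the whole statement to two results already in hand: Lemma~\ref{expression} for the equivalence (i) $\Leftrightarrow$ (ii), and Corollary~\ref{1 core} for the equivalence (i) $\Leftrightarrow$ (iii) together with the core-inverse formula. The only genuinely new computation will be the group-inverse formula $a^\#=(u^{-2})^*a$.

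First I would dispose of (i) $\Leftrightarrow$ (ii). Since $a\in R^\dag$, the Moore--Penrose inverse $a^\dag$ is in particular a $\{1,3\}$-inverse of $a$, so $a\in R^{(1,3)}$ holds automatically. By Lemma~\ref{expression}(i), $a\in R^{(\#)}$ if and only if $a\in R^\#\cap R^{(1,3)}$; as the second membership is free, this collapses to $a\in R^{(\#)}\Leftrightarrow a\in R^\#$.

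Next, for (i) $\Leftrightarrow$ (iii) and the core-inverse formula, the key observation is that one may take the distinguished $\{1,3\}$-inverse $a^{(1,3)}=a^\dag$ in Corollary~\ref{1 core}. With this choice $u=a^*+1-aa^{(1,3)}=a^*+1-aa^\dag$ is exactly the element in the statement, and $a\in R^{(1,3)}$ holds as noted. Corollary~\ref{1 core} then yields directly that $a\in R^{(\#)}$ if and only if $u\in R^{-1}$, and supplies $a^{(\#)}=(u^{-1})^*u^{-1}a^*$. Because conditions (ii) and (iii) of that corollary are equivalent, the choice of $\{1,3\}$-inverse is immaterial, so no real dependence on $a^\dag$ is introduced.

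It remains to establish $a^\#=(u^{-2})^*a$. Here I would use that $aa^\dag$ is self-adjoint, so $u^*=a+1-aa^\dag$, and compute $u^*a=a^2+a-aa^\dag a=a^2$ via $aa^\dag a=a$. Hence $a=(u^{-1})^*a^2$, a representation $a=ya^2$ with $y=(u^{-1})^*$. Since $a\in R^\#$ by the equivalence already proved, Lemma~\ref{Hartwig group inverse} gives $a^\#=y^2a$, and the final bookkeeping is $y^2=\big((u^*)^{-1}\big)^2=\big((u^*)^2\big)^{-1}=\big((u^2)^*\big)^{-1}=(u^{-2})^*$, where one commutes the involution past inversion and uses $(u^*)^2=(u^2)^*$. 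I expect no serious obstacle: the proposition is essentially the specialization of Corollary~\ref{1 core} to the case where the chosen $\{1,3\}$-inverse is the Moore--Penrose inverse. The only points demanding a little care are the identity $u^*a=a^2$ (which rests on the self-adjointness of $aa^\dag$) and the adjoint/inverse commutation in the last display, both of which are routine.
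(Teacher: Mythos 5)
Your proof is correct, but it takes a partly different route from the paper's. The parts that coincide: you prove (i) $\Leftrightarrow$ (ii) by noting $a^\dag\in a\{1,3\}$ and invoking Lemma \ref{expression} (the paper cites Theorem \ref{chen}, whose relevant equivalence rests on the same lemma), and you derive $a^\#=(u^{-2})^*a$ exactly as the paper does, from $u^*a=a^2$ and Lemma \ref{Hartwig group inverse}. The genuine difference is in (iii) and the core-inverse formula: you obtain both at once by specializing Corollary \ref{1 core} to the choice $a^{(1,3)}=a^\dag$, whereas the paper proves (ii) $\Leftrightarrow$ (iii) directly from Proposition \ref{group inverse}, using that $a^*+1-aa^\dag=(a+1-aa^\dag)^*$ because $aa^\dag$ is self-adjoint, and then computes the core inverse by hand: $a^{(\#)}=a^\#aa^\dag=(u^*)^{-2}a^2a^\dag=(u^*)^{-1}aa^\dag=(u^{-1})^*u^{-1}a^*$, the last step resting on the identity $uaa^\dag=a^*$. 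Your specialization is shorter and exposes the proposition as exactly Corollary \ref{1 core} with the Moore--Penrose inverse as the distinguished $\{1,3\}$-inverse; the paper's argument is more self-contained, needing only the group-inverse unit criterion and Hartwig's lemma, and it exhibits explicitly how the product $(u^{-1})^*u^{-1}a^*$ arises. One small point to make airtight in your write-up: Lemma \ref{Hartwig group inverse} as stated requires both representations $a=a^2x$ and $a=ya^2$ before concluding $a^\#=y^2a$; since you already know $a\in R^\#$, you may take $x=a^\#$, which is the same implicit move the paper makes.
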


\begin{proof} (i) $\Leftrightarrow$ (ii) by Theorem \ref{chen} (i)
$\Leftrightarrow$ (ii).

(ii) $\Leftrightarrow$ (iii). Note that
$a^*+1-aa^\dag=(a+1-aa^\dag)^*$. It follows from Proposition
\ref{group inverse} that $a\in R^\#$ if and only if $ a+1-aa^\dag\in
R^{-1}$ if and only if $a^*+1-aa^\dag\in R^{-1}$.

Let $u=a^*+1-aa^\dag$. Then $u^*a=a^2$ and $a=(u^*)^{-1}a^2$. As
$a\in R^\#$, then $a^\#=(u^*)^{-2}a=(u^{-2})^*a$ by Lemma
\ref{Hartwig group inverse}.

Since $a\in R^\dag$, it follows that
\begin{eqnarray*}
 a^{(\#)}&=&a^\#aa^{(1,3)}=a^\#aa^\dag=(u^*)^{-2}a^2a^\dag\\
 &=&(u^*)^{-1} (u^*)^{-1}a^2a^\dag=(u^*)^{-1}aa^\dag\\
 &=&(u^*)^{-1}u^{-1}uaa^\dag\\
 &=&(u^{-1})^*u^{-1}a^*.
 \end{eqnarray*}
The proof is completed.
\end{proof}

\begin{proposition}
Let $a\in R^\#$. Then the following conditions are equivalent{\rm :}

\emph{(i)} $a\in R^{(\#)} \cap R_{(\#)}$.

\emph{(ii)} $a\in R^\dag$.

\emph{(iii)} $a^*+1-aa^\#\in R^{-1}$.

In this case, $a^\dag=(u^{-1})^*a(u^{-1})^*$,
$a^{(\#)}=a^\#a(u^{-1})^*$ and $a_{(\#)}=(u^{-1})^*aa^\#$, where
$u=a^*+1-aa^\#$.
\end{proposition}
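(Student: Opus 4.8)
We are given $a \in R^\#$ and must prove the equivalence of three conditions together with the stated formulas. The overall strategy is to exploit the symmetry already present in the excerpt: the ring $R^\#$ membership of $a$ is preserved under the involution, so many of the claims will reduce to applying earlier results with $a^\#$ playing the role that $a^-$ (or $aa^-$) played before. The plan is to establish (i) $\Leftrightarrow$ (ii) first, then (ii) $\Leftrightarrow$ (iii), and finally to derive the three formulas by direct verification using the factorization of $u = a^* + 1 - aa^\#$.

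\textbf{Step 1: (i) $\Leftrightarrow$ (ii).}
Since $a \in R^\#$ is assumed throughout, Lemma \ref{expression} tells us that $a \in R^{(\#)}$ iff $a \in R^{(1,3)}$, and $a \in R_{(\#)}$ iff $a \in R^{(1,4)}$. Combined with the standard fact recalled after Lemma \ref{1314} that $a \in R^\dag$ iff $a \in R^{(1,3)} \cap R^{(1,4)}$, the equivalence of (i) and (ii) should fall out immediately. I would phrase this as: $a \in R^{(\#)} \cap R_{(\#)}$ forces both a $\{1,3\}$- and a $\{1,4\}$-inverse, hence $a \in R^\dag$; conversely $a \in R^\dag$ supplies both, and with $a \in R^\#$ already in hand, Lemma \ref{expression} yields both core and dual core invertibility.

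\textbf{Step 2: (ii) $\Leftrightarrow$ (iii).}
The key observation is that $u = a^* + 1 - aa^\# = (a + 1 - aa^\#)^*$, using $(aa^\#)^* $ in place of the involution only after noting $aa^\# = a^\#a$ commutes (this is a group inverse, so $aa^\#$ is an idempotent). Here I would first check that $a + 1 - aa^\#$ is invertible iff $a \in R^\dag$: since $aa^\# = a^\# a$ is the spectral idempotent, one can apply Proposition \ref{group inverse} (with the inner inverse taken to be $a^\#$, so $aa^- = aa^\#$) to relate invertibility of $a + 1 - aa^\#$ to Moore-Penrose-type conditions, or more directly adapt the argument of the preceding Proposition which treated $a^* + 1 - aa^\dag$. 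Applying the involution transfers invertibility of $a + 1 - aa^\#$ to that of $u$. The anticipated main obstacle is precisely this step: unlike the previous proposition where $aa^\dag$ was Hermitian, the idempotent $aa^\#$ need not be self-adjoint, so I cannot simply quote that $u = (a+1-aa^\#)^*$ has the clean symmetric form. I expect to need Lemma \ref{1314} to extract a $\{1,3\}$-inverse from a one-sided inverse of $u$: from $u^* a = (a+1-aa^\#)a = a^2$ one reads off $a = (u^*)^{-1}a^2$, and then the relation $ua = a^*a$ (which must be verified using $aa^\#a = a$) gives $a = u^{-1}a^*a \in Ra^*a$, so Lemma \ref{1314}(i) produces the $\{1,3\}$-inverse and hence $a \in R^{(\#)}$.

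\textbf{Step 3: The formulas.}
For $a^\dag = (u^{-1})^*a(u^{-1})^*$, I would verify the four Penrose equations directly, or better, use the established pattern from the remark following Corollary \ref{1 core}: from $a = u^{-1}a^*a$ one gets $(u^{-1})^* \in a\{1,3\}$, and dually one obtains a $\{1,4\}$-inverse, then apply $a^\dag = a^{(1,4)}aa^{(1,3)}$ as recalled after Lemma \ref{1314}. For $a^{(\#)} = a^\# a (u^{-1})^*$ and $a_{(\#)} = (u^{-1})^* a a^\#$, I would invoke Lemma \ref{expression}: $a^{(\#)} = a^\# a\, a^{(1,3)}$ with $a^{(1,3)} = (u^{-1})^*$, and $a_{(\#)} = a^{(1,4)} a a^\#$ with the dual $\{1,4\}$-inverse. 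The routine verification is then just substituting these inner inverses and simplifying using $aa^\# a = a$ and the identity $a = u^{-1}a^*a$; I do not anticipate difficulty here beyond bookkeeping, provided Step 2 has correctly pinned down that $(u^{-1})^*$ is genuinely a $\{1,3\}$-inverse.
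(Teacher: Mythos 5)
Your Steps 1 and 3, together with your sketch of (iii) $\Rightarrow$ (ii), essentially reproduce the paper's argument and are sound: Step 1 is the paper's appeal to Theorem \ref{chen} unwound through Lemma \ref{expression}; and once $u=a^*+1-aa^\#$ is known to be invertible, the correct relations $ua=a^*a$ and $au=aa^*$ (both only need $aa^\#a=a$ and $a^2a^\#=a$) give $a=u^{-1}a^*a=aa^*u^{-1}$, so by Lemma \ref{1314} the element $(u^{-1})^*$ is simultaneously a $\{1,3\}$- and a $\{1,4\}$-inverse of $a$, whence $a^\dag=(u^{-1})^*a(u^{-1})^*$, $a^{(\#)}=a^\#a(u^{-1})^*$ and $a_{(\#)}=(u^{-1})^*aa^\#$ by Lemma \ref{expression}.

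The genuine gap is the implication (ii) $\Rightarrow$ (iii): you never produce a valid argument that $a\in R^\dag$ (with $a\in R^\#$) forces $u\in R^{-1}$. You correctly flag the obstacle --- $aa^\#$ need not be self-adjoint, so $u\neq(a+1-aa^\#)^*$ --- but your fallback computation ``$u^*a=(a+1-aa^\#)a=a^2$'' silently reuses that same false identity (in fact $u^*a=a^2+a-(aa^\#)^*a$, and $(aa^\#)^*a\neq a$ in general), and in any case it concerns extracting consequences \emph{from} the invertibility of $u$, i.e.\ direction (iii) $\Rightarrow$ (ii), not establishing it. Moreover, your opening claim that ``$a+1-aa^\#$ is invertible iff $a\in R^\dag$'' cannot be right: by Proposition \ref{group inverse} with $a^-=a^\#$, the element $a+1-aa^\#$ is invertible precisely because $a\in R^\#$, so it is \emph{always} invertible under the standing hypothesis. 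Hence, if passing to adjoints did transfer its invertibility to $u$, every group invertible element would satisfy (iii) and so be Moore-Penrose invertible, which is false. The paper's missing ingredient is the factorization
\[
a^*a+1-a^\#a=(a^*+1-aa^\#)(a+1-aa^\dag),
\]
verified using $a^*aa^\dag=a^*$, $aa^\#a=a$ and $aa^\#=a^\#a$: here the left-hand side is invertible by Corollary \ref{MP inverse}(iii) applied with the inner inverse $a^-=a^\#$ (this uses $a\in R^\dag$), and $a+1-aa^\dag$ is invertible by Proposition \ref{group inverse} applied with $a^-=a^\dag$ (this uses $a\in R^\#$), so $u=(a^*a+1-a^\#a)(a+1-aa^\dag)^{-1}\in R^{-1}$. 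Without this factorization (or an equivalent device) your proposal does not prove (ii) $\Rightarrow$ (iii).
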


\begin{proof}
(i) $\Leftrightarrow$ (ii) by Theorem \ref{chen} (i)
$\Leftrightarrow$ (ii).

(ii) $\Rightarrow$ (iii). Note that $a\in R^\dag$ implies
$a^*a+1-a^\#a \in R^{-1}$ by Corollary \ref{MP inverse}. As $a\in
R^\#$, then $a+1-aa^\dag \in R^{-1}$ from Proposition \ref{group
inverse}. Since $a^*a+1-a^\#a=(a^*+1-aa^\#)(a+1-aa^\dag)$, it
follows that $a^*+1-aa^\#=(a^*a+1-a^\#a)(a+1-aa^\dag)^{-1}\in
R^{-1}$.

(iii) $\Rightarrow$ (ii). Let $u=a^*+1-aa^\#$. Then $ua=a^*a$ and
$au=aa^*$. As $u\in R^{-1}$, then $a=aa^*u^{-1}=u^{-1}a^*a\in aa^* R
\cap Ra^*a$. So, $a\in R^\dag$ and $(u^{-1})^*$ is both a
\{1,3\}-inverse and a \{1,4\}-inverse of $a$. Moreover,
$a^\dag=a^{(1,4)}aa^{(1,3)}=(u^{-1})^*a(u^{-1})^*$.

Hence, $a^{(\#)}=a^\#aa^{(1,3)}=a^\#a(u^{-1})^*$ and
$a_{(\#)}=a^{(1,4)}aa^\#=(u^{-1})^*aa^\#$.
\end{proof}

It is known that if $a\in R^\dag$ then $aa^{(1,3)}=aa^\dag$.
Applying Corollary \ref{add new}, it follows that

\begin{corollary} Let $R$ be a Dedekind-finite ring. If $a\in R^\dag$, then $a\in R^{(\#)}$ if and only if $a^*a+1-aa^\dag \in R^{-1}$.
In this case, $a^{(\#)}=(a^*a+1-aa^\dag)^{-1}a^*$.
\end{corollary}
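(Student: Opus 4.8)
The plan is to obtain this corollary as an immediate specialization of Corollary \ref{add new}. First I would note that since $a\in R^\dag$, the Moore-Penrose inverse $a^\dag$ is in particular a $\{1,3\}$-inverse of $a$; hence $a\in R^{(1,3)}$ and $a^\dag\in a\{1,3\}$, which makes $a^\dag$ a legitimate choice for the $\{1,3\}$-inverse appearing in Corollary \ref{add new}.

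The one point deserving care is the identity $aa^{(1,3)}=aa^\dag$, recalled just before the statement, which holds for every $\{1,3\}$-inverse $a^{(1,3)}$ when $a\in R^\dag$. This is what makes the statement well posed: it shows that the element $a^*a+1-aa^{(1,3)}$ occurring in Corollary \ref{add new} is in fact independent of the chosen $\{1,3\}$-inverse and equals $a^*a+1-aa^\dag$. Consequently the ``for some'' and ``for any'' versions in parts (ii) and (iii) of Corollary \ref{add new} coincide, and both reduce to the single condition $v:=a^*a+1-aa^\dag\in R^{-1}$.

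With this observation in hand, the equivalence (i) $\Leftrightarrow$ (iii) of Corollary \ref{add new} directly yields that $a\in R^{(\#)}$ if and only if $v\in R^{-1}$. The representation then transfers verbatim: Corollary \ref{add new} gives $a^{(\#)}=v^{-1}a^*$, which here is exactly $a^{(\#)}=(a^*a+1-aa^\dag)^{-1}a^*$. I expect no genuine obstacle in this argument; essentially everything is packaged into Corollary \ref{add new}, and the only substantive step is justifying the substitution $a^{(1,3)}=a^\dag$ via the projection identity $aa^{(1,3)}=aa^\dag$, which guarantees both that $a^\dag$ is an admissible choice and that the invertibility condition does not depend on it.
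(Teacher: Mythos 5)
Your proposal is correct and matches the paper's own argument: the paper likewise derives this corollary by specializing Corollary \ref{add new} with $a^\dag$ as the chosen $\{1,3\}$-inverse, citing the same identity $aa^{(1,3)}=aa^\dag$ to justify the substitution. Your extra remark that this identity makes the invertibility condition independent of the choice of $\{1,3\}$-inverse is a slightly more explicit packaging of what the paper leaves implicit, but the route is the same.
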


\begin{remark}
{\rm  Suppose $2\in R^{-1}$. If $a^*a+1-aa^\dag \in R^{-1}$ implies
$a\in R^{(\#)}$ for any $a\in R^\dag$, then $a^*a=1$ can conclude
$aa^*=1$. Indeed, if $a^*a=1$, then $a\in R^\dag$ and $a^\dag=a^*$.
Hence, $a^*a+1-aa^\dag=2-aa^\dag \in R^{-1}$ with inverse
$\frac{1}{2}(1+aa^\dag)$. Thus, $a\in R^{(\#)}$ and $a\in R^\#$. As
$aa^\#=a^\#a=(a^*a)a^\#a=a^*a=1$, then $a\in R^{-1}$ and hence
$aa^*=1$.}
\end{remark}

\centerline {\bf ACKNOWLEDGMENTS} The authors are highly grateful to
the referee for his/her valuable comments which led to improvements
of this paper. This research was carried out by the first author
during his visit to the Department of Mathematics and Applications,
University of Minho, Portugal. The first and second authors
gratefully acknowledge the financial support of China Scholarship
Council. This research is also supported by the National Natural
Science Foundation of China (No. 11371089), the Natural Science
Foundation of Jiangsu Province (No. BK20141327), the Scientific
Innovation Research  of College Graduates in Jiangsu Province (No.
CXLX13-072), the Scientific Research Foundation of Graduate School
of Southeast University, the Portuguese Funds through FCT-
`Funda\c{c}\~{a}o para a Ci\^{e}ncia e a Tecnologia', within the
project UID-MAT-00013/2013.
\bigskip

\end{document}